\newcommand{\R}{{\mathbb R}}
\newcommand{\C}{{\mathbb C}}
\newcommand{\Sp}{{\mathbb S}}
\newcommand{\ds}{\displaystyle}
\newcommand{\be}{\begin{eqnarray}}
\newcommand{\ben}{\begin{eqnarray*}}
\newcommand{\en}{\end{eqnarray}}
\newcommand{\enn}{\end{eqnarray*}}
\newcommand{\ba}{\backslash}
\newcommand{\pa}{\partial}
\newcommand{\ov}{\overline}
\newcommand{\G}{\Gamma}
\newcommand{\om}{\omega}
\newcommand{\wi}{\widetilde}
\newcommand{\hth}{\theta}
\newcommand{\hx}{\hat{x}}
\newtheorem{theorem}{Theorem}[section]
\newtheorem{lemma}[theorem]{Lemma}
\begin{document}
\title{\bf Identification of point like objects with multi-frequency sparse data}
\author{Xia Ji\thanks{LSEC, NCMIS and Academy of Mathematics and Systems Science, Chinese Academy of Sciences,
Beijing 100190, China. Email: jixia@lsec.cc.ac.cn (XJ)}
,\and
Xiaodong Liu\thanks{NCMIS and Academy of Mathematics and Systems Science,
Chinese Academy of Sciences, Beijing 100190, China. Email: xdliu@amt.ac.cn (XL)}}
\date{}
\maketitle

\begin{abstract}
The inverse acoustic scattering of point objects using multi-frequency sparse measurements are studied.
The objects may be a sum of point sources or point like scatterers.
We show that the locations and scattering strengths of the point objects can be uniquely identified by the multi-frequency near or far fields taken at sparse sensors.
Based on the uniqueness analysis, some direct methods have also been proposed for reconstructing the locations and determining the scattering strengths.
The numerical examples are conducted to show the validity and robustness of the proposed numerical methods.

\vspace{.2in}
{\bf Keywords:} inverse scattering; multi-frequency; sparse data; uniqueness; sampling method.

\vspace{.2in} {\bf AMS subject classifications:}
35P25, 45Q05, 78A46, 74B05

\end{abstract}

\section{Introduction}
The inverse scattering theory aims to reconstruct the unknown objects from the wave measurements. This plays an important role in many areas such as radar, nondestructive testing, medical imaging, geophysical prospection and remote sensing. We refer to the standard monograph \cite{CK} for a research statement on the significant progress both in the mathematical theories and the numerical approaches.

A practical difficulty is that the measurements are not easy or even impossible to be taken all around the unknown objects. Actually, the measurements are often available at a few sensors (i.e., a sparse array). At a fixed sensor, it is easy to vary frequency to obtain more data. This is still a small set of data, which indeed brings many difficulties for the solvability of the inverse problems.
The first result is given in 2005 by Sylvester and Kelly \cite{SylvesterKelly}, where they considered the linear inverse acoustic source problems and showed that a convex polygon containing the unknown source  with normals in the observation directions can be uniquely determined by the multi-frequency sparse far field patterns. This implies, even for such a small data set, that one can make a meaningful statement about the size and location of the source.
A factorization method using sparse multi-frequency far field measurements is recently introduced in \cite{GriesmaierSchmiedecke-source} to produce a union of convex polygons that approximate the locations and the geometry of well-separated source components. We also refer to \cite{AlaHuLiuSun}, where a direct sampling method using sparse multi-frequency far field measurements is designed to reconstruct the location and shape. Surprisingly, even the concave part of the source support can be well reconstructed with enough observation directions. However, the corresponding theoretical basis is still not established. The direct sampling method is also generalized for inverse acoustic, elastic and electromagnetic source scattering problems with phased or phaseless multi-frequency sparse far field data \cite{JiLiu-elastic,JiLiu-electromagnetic, JiLiuZhang-source}. It is also shown in \cite{AlaHuLiuSun} that the smallest annular centered at the sensor that containing the source support in $\R^3$ can be uniquely determined by the multi-frequency scattered fields taken at the sensor. Difficulties arise for the inverse obstacle/medium scattering problems because these problems are nonlinear. Sylvester and Kelly \cite{SylvesterKelly} considered Born approximation to the inverse medium problem and obtain similar results with the inverse source problem.
The MUSIC (MUltiple-SIgnal-Classification) algorithm \cite{GriesmaierSchmiedecke} is studied for locating small inhomogeneities.
Based on the weak scattering approximation and the Kirchhoff approximation, a direct sampling method is proposed for location and shape reconstruction of the underlying objects \cite{JiLiu-sparse} by using multi-frequency sparse back-scattering far field measurements.

In practical radar and remote sensing, faraway objects radiate fields that, within measurement precision, are nearly those radiated by point like objects.
The objects may be a sum of point sources or point like scatterers. This paper aims to identify their locations and strengths based
on multi-frequency sparse near or far fields. Our main focus in this paper is the uniqueness theories and numerical algorithms for determining the point like
objects.
In the recent manuscript \cite{JiLiu-sparse}, the point like obstacles have been considered with multi-frequency sparse backscattering far field patterns.
This paper clarifies the smallest number of sensors to be used.
Furthermore, we show the uniqueness of the scattering strengths and introduce the corresponding formula.
Based on the uniqueness analyses, some novel direct methods are designed to locate the points and to reconstruct the scattering strengths.
We refer to \cite{ZGLL} for locating point sources using direct sampling methods with the single frequency far field patterns at all the observation directions.

The remaining part of the work is organized as follows.
In the next section, we introduce the inverse scattering of point sources and the inverse scattering of plane waves by point like scatterers.
We then proceed in the Section \ref{uniqueness} for the uniqueness results with sparse data.
Section \ref{NumMethods} is devoted to some numerical methods for reconstructing the numbers, locations and strengths of the point objects.
The numerical methods are then verified in Section \ref{NumExamples} by extensive examples.

\section{Scattering by point objects}

We begin with the formulations of the acoustic scattering problem. Let $k=\om/c>0$ be the wave number of
a time harmonic wave, where $\om>0$ and $c>0$ denote the frequency and sound
speed, respectively.
In the whole paper, we consider multiple frequencies in a bounded band, i.e.,
\be\label{kassumption}
k\in (k_{-}, k^{+}),
\en
with two positive wave numbers $k_{-}$ and $k^{+}$.
Recall the fundamental solution $\Phi_k(x,y), x,y\in \R^n, x\neq y,$ of the Helmholtz equation, which is given by
\be\label{Phi}
\Phi_k(x,y):=\left\{
         \begin{array}{ll}
         \ds\frac{ik}{4\pi}h^{(1)}_0(k|x-y|)=\frac{e^{ik|x-y|}}{4\pi|x-y|}, & n=3, \\
         \ds\frac{i}{4}H^{(1)}_0(k|x-y|), & n=2.
         \end{array}
         \right.
\en
Here, $h^{(1)}_0$ and $H^{(1)}_0$ are, respectively, spherical Hankel function and Hankel function of
the first kind and order zero.

{\bf Point sources.} We consider an array of $M$ point sources located at $z_1, z_2,\cdots, z_M\in\R^{n}$ in the homogeneous space $\R^{n},n=2,3$, denote by $\tau_m\in\C\ba\{0\}$ the scattering strength of the $m$-th point source.
The scattered field $u^s$ is a solution of the following equation
\ben
\Delta u^s +k^2 u^s = \sum_{m=1}^{M}\tau_m\delta_{z_m} \quad \mbox{in}\,\, \R^n,
\enn
$\delta_{z_m}$ denoting the Dirac measure on $\R^n$ giving unit mass to the point $z_m,\,m=1,2,\cdots,M$.
Precisely, the scattered field $u^s$ is given by
\be\label{us-source}
u^s(x,k)=\sum_{m=1}^{M}\tau_m\Phi_k(x,z_m), \quad x\in\R^n\ba\{z_1, z_2,\cdots, z_M\}.
\en
From the asymptotic behavior of $\Phi_k(x,y)$ we conclude that
\be\label{sourceasyrep}
u^s(x,k)
=\frac{e^{i\frac{\pi}{4}}}{\sqrt{8k\pi}}\left(e^{-i\frac{\pi}{4}}\sqrt{\frac{k}{2\pi}}\right)^{n-2}
\frac{e^{ikr}}{r^{\frac{n-1}{2}}}\left\{\sum_{m=1}^{M}\tau_j e^{-ik\hx\cdot z_j}
+\mathcal{O}\left(\frac{1}{r}\right)\right\}\quad\mbox{as }\,r:=|x|\rightarrow\infty,
\en
Therefore, the far field pattern is given by
\be\label{uinf-source}
u^{\infty}(\hx,k)=\sum_{m=1}^{M}\tau_m e^{-ik\hx\cdot z_m},
\en
where $\hx\in \Sp^{n-1}:=\{x\in\R^n:|x|=1\}$ denotes the observation direction.

{\bf Point like scatterers.} The second case of our interest is the scattering of plane waves by $M$ point like scatterers located at $z_1, z_2,\cdots, z_M\in\R^{n}$ in the homogeneous space $\R^{n},n=2,3$. The incident plane wave $u^{i}$ is of the form
\be\label{incidenwave}
u^{i}(x,\hth,k) = e^{ikx\cdot \theta},\quad x\in\R^n,
\en
where $\theta\in\Sp^{n-1}$ denotes the direction of the incident wave.

By neglecting all the multiple scattering between the scatterers, the scattered field $u^{s}$ is given by \cite{Foldy}
\be\label{uszm}
u^{s}(x,\hth,k)= \sum_{m=1}^{M}\tau_m u^{i}(z_m,\hth,k)\Phi_k(x,z_m),
\en
which solves the equation $\Delta u^s(x,\hth,k) +k^2 u^s(x,\hth,k) = \sum_{m=1}^{M}\tau_m u^{i}(z_m,\hth,k)\delta_{z_m}$ in $\R^n$.
Here, $\tau_m\in\C\ba\{0\}$ is the scattering strength of the $m$-th target, $m=1,2,\cdots, M.$
Similarly, the far field pattern is given by
\be\label{uinf-scatterer}
u^{\infty}(\hx,\hth, k)=\sum_{m=1}^{M}\tau_m e^{-ik(\hx-\hth)\cdot z_m}, \quad \hx,\hth\in \Sp^{n-1}.
\en
Of particular interest is the backscattering case, i.e., $\hx=-\hth$.

The inverse problem is to identify the numbers, locations and scattering strengths of the unknown point object from the multi-frequency scattered fields or far fields at a few sensors. We denote by
\ben
\G:=\{x_1, x_2, \cdots, x_L\} \subset \R^n\ba\{z_1, z_2, \cdots, z_M\}
\enn
and
\be\label{thetaL}
\Theta_{L}:=\{ \hx_1,  \hx_2, \cdots, \hx_L \}\in \Sp^{n-1},
\en
respectively, the collection of the sensors for the scattered fields and the far fields.

\section{Uniqueness}
\label{uniqueness}
In this section, we investigate under what conditions a target is uniquely determined by a knowledge of its scattered fields or far field patterns. We note that by
analyticity both the scattered field and its far field pattern is completely determined for all positive frequencies by only knowing them in some bounded band, as given in \eqref{kassumption}.

We begin with the simplest case with a single point source, i.e., $M=1$. In this case, the scattered field and its far field are given, respectively, by
\be\label{1Musuinf}
u^s(x,k)=\tau_1\Phi_k(x,z_1) \quad\mbox{and}\quad u^\infty(\hx, k)=\tau_1 e^{-ik\hx\cdot z_1}.
\en

\begin{theorem}\label{1k1M}
For a fixed frequency $k>0$, let $M=1$, then we have the following results.
\begin{itemize}
  \item For any single sensor $\hx\in\Sp^{n-1},\,n=2,3$ or $x\in\R^3\ba\{z_1\}$, we have
     \be\label{11}
     \tau_1=u^\infty(\hx, k)e^{ik\hx\cdot z_1}, \qquad |x-z_1|=\frac{|\tau_1|}{4\pi|u^s(x,k)|}.
     \en
  \item If we know the location $z_1$ in advance, then the strength $\tau_1$ is uniquely determined by the scattered field $u^s(x,k)$ at a single sensor $ x\in\R^n\ba\{z_1\}$ or the far field pattern $u^{\infty}(\hx, k)$ at a single observation direction $\hx\in\Sp^{n-1},\,n=2,3$.
  \item If we know the scattering strength $\tau_1$ in advance, under the condition that $|k\hx\cdot z_1|<\pi$, then the value $\hx\cdot z_1$ is uniquely determined by the far field pattern $u^{\infty}(\hx, k)$ at a single observation direction $\hx\in\Sp^{n-1},\,n=2,3$. Furthermore, the location $z_1\in\R^n$ can be uniquely determined by $n$ linearly independent observation directions.
  \item If we know the modulus $|\tau_1|$ in advance, then the distance $|x-z_1|$ is uniquely determined by the phaseless scattered field $|u^s(x,k)|$ at a fixed sensor $x\in\R^3$. Furthermore, the location $z_1\in \R^3$ can be uniquely determined by four sensors $x\in\{x_1, x_2, x_3, x_4\}\subset\R^3\ba\{z_1\}$, which are not coplanar.
\end{itemize}
\end{theorem}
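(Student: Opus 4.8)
The plan is to prove each of the four bullet points in Theorem \ref{1k1M} essentially by algebraic manipulation of the explicit single-source representations \eqref{1Musuinf}, since with $M=1$ the scattered field and far field are given in closed form.

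For the first bullet, I would simply solve the two equations in \eqref{1Musuinf} for the unknowns. From $u^\infty(\hx,k)=\tau_1 e^{-ik\hx\cdot z_1}$ and $|e^{-ik\hx\cdot z_1}|=1$, multiplying by $e^{ik\hx\cdot z_1}$ immediately gives $\tau_1=u^\infty(\hx,k)e^{ik\hx\cdot z_1}$. For the distance formula, I would use the three-dimensional kernel $\Phi_k(x,z_1)=e^{ik|x-z_1|}/(4\pi|x-z_1|)$, take moduli on both sides of $u^s(x,k)=\tau_1\Phi_k(x,z_1)$, note $|e^{ik|x-z_1|}|=1$, and solve $|u^s(x,k)|=|\tau_1|/(4\pi|x-z_1|)$ for $|x-z_1|$.

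The second bullet is an immediate consequence: if $z_1$ is known, the phase factor $e^{-ik\hx\cdot z_1}$ (resp.\ the kernel $\Phi_k(x,z_1)$) is a known nonzero number, so $\tau_1$ is recovered by division. For the third bullet, knowing $\tau_1$ lets me form $u^\infty(\hx,k)/\tau_1=e^{-ik\hx\cdot z_1}$; I then take the argument of this unit-modulus complex number. The key point here is injectivity of $t\mapsto e^{-it}$ on the interval $(-\pi,\pi)$, which is exactly what the hypothesis $|k\hx\cdot z_1|<\pi$ guarantees, so $k\hx\cdot z_1$ and hence $\hx\cdot z_1$ is uniquely determined. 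Collecting $n$ such scalar equations for $n$ linearly independent directions $\hx^{(1)},\dots,\hx^{(n)}$ yields a linear system whose coefficient matrix is invertible, recovering $z_1$ uniquely.

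The fourth bullet follows the same pattern as the distance formula, using only the modulus data $|u^s(x,k)|$ together with the known $|\tau_1|$ to get $|x-z_1|=|\tau_1|/(4\pi|u^s(x,k)|)$ at each sensor. Given four non-coplanar sensors $x_1,x_2,x_3,x_4$, the point $z_1$ lies on four spheres of known radii centered at these points; subtracting pairs of the equations $|x_j-z_1|^2=r_j^2$ linearizes the problem (the quadratic terms $|z_1|^2$ cancel), and the non-coplanarity of the centers guarantees the resulting linear system for $z_1$ has full rank and hence a unique solution. The only genuinely delicate step is the third bullet, where the uniqueness of $\hx\cdot z_1$ is not free but rests entirely on the branch restriction $|k\hx\cdot z_1|<\pi$; without it the phase is only determined modulo $2\pi/k$, so I expect that to be the main point requiring care.
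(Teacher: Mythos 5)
Your proposal is correct. For the first three bullets you do exactly what the paper does (the paper simply declares them ``obvious from the representation \eqref{1Musuinf}''), and you in fact add value by making explicit the one nontrivial point, namely that injectivity of $t\mapsto e^{-it}$ on $(-\pi,\pi)$ is what the hypothesis $|k\hx\cdot z_1|<\pi$ buys. The only genuine divergence is in the fourth bullet. The paper argues geometrically and constructively: $z_1$ lies on the four spheres $\pa B_{r_j}(x_j)$ with $r_j=|\tau_1|/(4\pi|u^s(x_j,k)|)$; the first two spheres intersect in a circle, that circle meets the third sphere in at most two points $A,B$ (using that $x_1,x_2,x_3$ are not collinear), and the fourth non-coplanar sensor discriminates between $A$ and $B$ since $|x_4-A|\neq|x_4-B|$. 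You instead linearize by subtracting pairs of the equations $|x_j-z_1|^2=r_j^2$, so that the quadratic terms cancel and non-coplanarity of the centers makes the $3\times 3$ coefficient matrix (with rows proportional to $x_1-x_j$, $j=2,3,4$) invertible. Both arguments are valid; yours is shorter and avoids the paper's case distinction $A=B$ versus $A\neq B$, while the paper's sphere-intersection construction is deliberately algorithmic --- it is recycled almost verbatim as the ``one point determination scheme'' in Section \ref{NumMethods}, which is presumably why the authors prefer it.
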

\begin{proof}
The first three results are obvious from the representation \eqref{1Musuinf} of the scattered field and its far field pattern. If the modulus $|\tau_1|$ is given in advance, this implies that the location
\ben
z_1\in \pa B_{r_j}(x_j),
\enn
where $\pa B_{r_j}(x_j)$ is a sphere centered at the sensor $x_j$ with radius $r_j:=\frac{|\tau_1|}{4\pi|u^s(x_j,k)|}$, $j=1,2,3,4$.
We give a constructive proof for the determination of the location $z_1$.
With the first two sensors $x_1$ and $x_2$, we
obtain that $z_1$ is located on the circle $\pa B_{r_1}(x_1) \cap \pa B_{r_2}(x_2)$, which is the intersection of two spheres $\pa B_{r_1}(x_1)$ and $\pa B_{r_2}(x_2)$. Since the four sensors $x_1, x_2, x_3$ and $x_4$  are not coplanar, we have that $x_1, x_2$ and $x_3$ are not collinear. This implies that the circle $\pa B_{r_1}(x_1) \cap \pa B_{r_2}(x_2)$ and the sphere $\pa B_{r_3}(x_3)$ intersect at two points $\{A, B\}$. If $A=B$, then $A$ is exactly the position $z_1$ we are looking for. Otherwise, $|x_4-A|\neq |x_4-B|$ because $x_4$ is not in the plane passing through $x_1, x_2$ and $x_3$. Thus $z_1=A$ if $r_4=|x_4-A|$, or else $z_1=B$.
\end{proof}

Note that the second equality in \eqref{11} does not hold in $\R^2$, and therefore, the fourth result in Theorem \ref{1k1M} is not clear in two dimensions. 
Actually, we claim that the modulus $|H^{(1)}_0(t)|$ is monotonous with respect to the variable $t$.
Numerical experiments indicate that this is indeed the case but a rigorous proof is not known. If this is correct, we can show that the distance $|x-z|$ in $\R^2$ can be uniquely determined by the modulus $|u^s(x, k)|$ of the scattered field, and therefore the location $z$ can be uniquely determined by the phaseless scattered fields  $|u^s(x, k)|$ at three sensors that are not collinear. This procedure is based on the phase retrieval technique proposed in the recent works \cite{JiLiu-elastic,JiLiu-electromagnetic,JiLiuZhang-source}.

To remove
the assumption on $|\tau_1|$, we can determine both the location $z_1$ and the strength $\tau_1$ by measurements with frequency in a bounded band as given in \eqref{kassumption}.

\begin{theorem}\label{mk1M}
For all $k\in (k_{-}, k_{+})$ and let $M=1$. Then we have the following uniqueness results.
\begin{itemize}
  \item In $\R^3$, assume that $u^s(x,k_{-})\neq u^s(x, k_{+})$ at four sensors $x\in\{x_1, x_2, x_3, x_4\}\subset\R^3\ba\{z_1\}$, which are not coplanar. Then both the location $z_1$ and the strength $\tau_1$ can be uniquely determined by the multi-frequency scattered fields $u^s(x,k), \,x\in\{x_1, x_2, x_3, x_4\},\,k\in (k_{-}, k_{+})$.
  \item In $\R^n$, assume that $u^\infty(\hx,k_{-})\neq u^\infty(\hx, k_{+})$ at $n$ linearly independent observation directions $\hx\in \{\hx_1, \cdots, \hx_n\}, \,n=2,3$. Then both the location $z_1$ and the strength $\tau_1$ can be uniquely determined by the multi-frequency far field patterns $u^\infty(\hx,k), k\in (k_{-}, k_{+})$ at $n$ linearly independent observation directions $\hx\in \{\hx_1, \cdots, \hx_n\}, \,n=2,3$.
\end{itemize}
\end{theorem}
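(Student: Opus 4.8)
The plan is to exploit that, for each fixed sensor or observation direction, the multi-frequency data is a single exponential in $k$ whose modulus is constant and whose phase grows linearly; the slope of the phase encodes exactly the geometric quantity needed to locate $z_1$, and it does so independently of the unknown strength $\tau_1$.

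For the scattered-field case in $\R^3$, fix a sensor $x_j$ and set $r_j:=|x_j-z_1|>0$. By \eqref{us-source} with $M=1$ one has $u^s(x_j,k)=\tau_1 e^{ikr_j}/(4\pi r_j)$ for $k\in(k_-,k_+)$, so $|u^s(x_j,k)|=|\tau_1|/(4\pi r_j)$ is independent of $k$ while $\arg u^s(x_j,k)=\arg\tau_1+kr_j$. First I would establish a constant-modulus/linear-phase uniqueness fact: if $Ce^{ikr}=C'e^{ikr'}$ for all $k$ in the band with $r,r'>0$ and $C,C'\neq0$, then dividing gives $C/C'=e^{ik(r'-r)}$ for every such $k$, which forces $r'=r$ and then $C'=C$. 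Equivalently, $r_j$ is read off from the logarithmic derivative $r_j=-i\,\pa_k u^s(x_j,k)/u^s(x_j,k)$, which is manifestly real and $k$-independent. The hypothesis $u^s(x_j,k_-)\neq u^s(x_j,k_+)$ guarantees a genuinely varying phase across the band, so this frequency dependence is observable and the degenerate constant-in-$k$ situation is excluded.

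With $r_j=|x_j-z_1|$ in hand at the four non-coplanar sensors, I would recover $z_1$ by the very intersection argument already used in the fourth item of Theorem \ref{1k1M}: $z_1$ lies on each sphere $\pa B_{r_j}(x_j)$, and the non-coplanarity of $\{x_1,x_2,x_3,x_4\}$ pins down the single common point. The essential gain over Theorem \ref{1k1M} is that the radii $r_j$ now come directly from the phase slopes, so no prior knowledge of $|\tau_1|$ is needed. Once $z_1$ is known, the strength is fixed by the pointwise identity $\tau_1=4\pi r_j e^{-ikr_j}u^s(x_j,k)$ for any single $j$ and any $k$.

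The far-field case in $\R^n$ follows the same template. For a fixed direction $\hx_j$ put $a_j:=\hx_j\cdot z_1$; by \eqref{uinf-source} with $M=1$ one has $u^\infty(\hx_j,k)=\tau_1 e^{-ika_j}$, whence $|u^\infty|=|\tau_1|$ is constant and the phase slope again yields $a_j$ uniquely by the fact above, with $u^\infty(\hx_j,k_-)\neq u^\infty(\hx_j,k_+)$ ensuring resolvability and removing the small-argument restriction $|k\hx\cdot z_1|<\pi$ present in Theorem \ref{1k1M}. Knowing $a_j=\hx_j\cdot z_1$ for $n$ linearly independent directions, the linear system $\hx_j\cdot z_1=a_j$ has invertible coefficient matrix and determines $z_1\in\R^n$ uniquely, after which $\tau_1=u^\infty(\hx_j,k)e^{ika_j}$ gives the strength. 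I expect the main obstacle to be precisely the phase step: one must extract the linear slope without a $2\pi$ ambiguity, and this is exactly what knowing the data on the whole interval $(k_-,k_+)$ --- rather than at two isolated frequencies --- supplies, since the function $k\mapsto e^{ikr_j}$ on an interval determines $r_j$ outright.
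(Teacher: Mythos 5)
Your proof is correct and follows the same overall structure as the paper's: recover the scalar $|x_j-z_1|$ (resp.\ $\hx_j\cdot z_1$) from the $k$-dependence of the data at each sensor independently of $\tau_1$, then assemble $z_1$ by the sphere-intersection argument of Theorem \ref{1k1M} (resp.\ by inverting the linear system), and finally read off $\tau_1$ from the representation \eqref{1Musuinf}. The one genuine difference is the extraction step. The paper integrates the ratio $u^s(x,k)/u^s(x,k_-)$ over the band $(k_-,k_+)$ to obtain the closed-form expressions \eqref{x-z1} and \eqref{hxdotz1}; these formulas are then reused verbatim as the numerical reconstruction formulas \eqref{Numx-z1} and \eqref{Numhxdotz1}, and the hypothesis $u^s(x,k_-)\neq u^s(x,k_+)$ is needed there precisely so that the quotient is not of the form $0/0$ (both numerator and denominator vanish exactly when $(k_+-k_-)|x-z_1|\in 2\pi\Z$). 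You instead invoke the injectivity of $r\mapsto e^{ikr}$ on a frequency interval, equivalently the logarithmic derivative $-i\,\pa_k u^s/u^s=r_j$, which is more elementary and in fact does not require that hypothesis at all: the phase always varies since $r_j>0$, and in the far-field case even a constant signal pins down $\hx\cdot z_1=0$. Your stated reading of the hypothesis as ``guaranteeing a genuinely varying phase'' slightly mischaracterizes its role --- it is an artifact of the paper's explicit formula rather than a requirement for uniqueness --- but this does not affect the validity of your argument. The trade-off is that the paper's route is constructive in a way that feeds directly into Section \ref{NumMethods}, while yours is a cleaner pure-uniqueness argument under marginally weaker assumptions.
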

\begin{proof}
By the representation \eqref{1Musuinf} for the scattered field, we have
\ben
\frac{u^s(x,k)}{u^s(x,k_{-})} = e^{i(k-k_{-})|x-z_1|}, \quad x\in \{x_1, x_2, x_3, x_4\}, \, k\in (k_{-}, k_{+}).
\enn
Taking integral on both sides with respect to the frequency $k$ over the frequency band $(k_{-}, k_{+})$, we have
\ben
\int_{k_{-}}^{k_{+}}\frac{u^s(x,k)}{u^s(x,k_{-})}dk
&=& \int_{k_{-}}^{k_{+}}e^{i(k-k_{-})|x-z_1|}dk\cr
&=& \frac{1}{i|x-z_1|}\left[e^{i(k_{+}-k_{-})|x-z_1|}-1\right]\cr
&=& \frac{-i}{|x-z_1|}\left[\frac{u^s(x,k_{+})}{u^s(x,k_{-})}-1\right], \quad x\in \{x_1, x_2, x_3, x_4\}.
\enn
This implies that
\be\label{x-z1}
|x-z_1| = -i\frac{\frac{u^s(x,k_{+})}{u^s(x,k_{-})}-1}{\int_{k_{-}}^{k_{+}}\frac{u^s(x,k)}{u^s(x,k_{-})}dk}, \quad x\in \{x_1, x_2, x_3, x_4\}.
\en
Note that our assumption on the scattered fields ensures that both the numerator and the denominator of the right hand side are nonzeros.
Thus $\tau_1$ can be uniquely recovered with the help of the representation \eqref{1Musuinf}. Since the distances $|x-z_1|$, $x\in \{x_1, x_2, x_3, x_4\}$, are uniquely determined, one can recover the location $z_1$ by a constructive way as the arguments in the previous Theorem \ref{1k1M}.

Now we turn to the far field measurements. Similarly, by the representation \eqref{1Musuinf} for the far field pattern, we have
\ben
\frac{u^\infty(\hx,k)}{u^\infty(\hx,k_1)} = e^{-i(k-k_1)\hx\cdot z_1}, \quad \hx\in \{\hx_1, \cdots, \hx_n\}, \, k\in (k_{-}, k_{+}).
\enn
Multiplying this identity by $-i\hx\cdot z_1$, integrating over the frequency band $(k_{-}, k_{+})$, we obtain
\ben
-i\hx\cdot z_1\int_{k_{-}}^{k_{+}}\frac{u^\infty(\hx,k)}{u^\infty(\hx,k_{-})}dk
&=& -i\hx\cdot z_1\int_{k_{-}}^{k_{+}}e^{-i(k-k_{-})\hx\cdot z_1}dk\cr
&=& e^{-i(k_{+}-k_{-})\hx\cdot z_1}-1\cr
&=& \frac{u^\infty(\hx,k_{+})}{u^\infty(\hx,k_{-})}-1, \quad\hx\in \{\hx_1, \cdots, \hx_n\}.
\enn
This implies that
\be\label{hxdotz1}
\hx\cdot z_1 = i\frac{\frac{u^\infty(\hx,k_{+})}{u^\infty(\hx,k_{-})}-1}{\int_{k_{-}}^{k_{+}}\frac{u^\infty(\hx,k)}{u^\infty(\hx,k_{-})}dk}, \quad \hx\in \{\hx_1, \cdots, \hx_n\}.
\en
The scattering strength $\tau_1$ is then uniquely determined by combining the representation \eqref{1Musuinf}. The identify \eqref{hxdotz1} also implies that
$z_1$ is uniquely determined by noting the fact that the observation directions $\hx_1, \hx_2$ and  $\hx_n$ are linearly independent.
\end{proof}

Difficulties arise if there are more than one point source, i.e., $M>1$. This is due to the severe nonlinearity between the measurements and the locations of the point sources. Note that
\be\label{hyperplane}
\Pi_{l,m}:\quad \hx_l\cdot(z-z_m)=0
\en
is the hyperplane passing through the location $z_m, \, m=1, 2, \cdots, M,$ with normal $\hx_l\in\Theta_{L},\, l=1, 2, \cdots, L$.
For any point $z\in\R^n$, denote by $f(z)$ the number of the hyperplanes $\Pi_{l,m},\,l=1, 2, \cdots, L,\,m=1, 2, \cdots, M,$ passing through $z$.

\begin{lemma}\label{lemma1}
We consider $M$ points $z_1, z_2,\cdots, z_M$ in $\R^n$.  Define
\be\label{L}
L:=\left\{
     \begin{array}{ll}
       M+1, & \hbox{in $\R^2$\rm ;} \\
       2M+1, & \hbox{in $\R^3$.}
     \end{array}
   \right.
\en
Recall the sparse observation directions set $\Theta_{L}:=\{\hx_1, \hx_2, \cdots, \hx_L\}$
and the hyperplanes $\Pi_{l,m}$ as in \eqref{hyperplane} for $l=1, 2, \cdots, L$ and $m=1, 2, \cdots, M$.
\begin{itemize}
  \item In $\R^2$, if any two directions in $\Theta_{L}$ are not collinear, then $f(z_m)=M+1$\, $m=1, 2, \cdots, M$ and $f(z)\leq M$ if $z\in\R^2\ba\{z_1, z_2,\cdots, z_M\}$.
  \item In $\R^3$, if any three directions in $\Theta_{L}$ are not coplanar, then $f(z_m)=2M+1$\, $m=1, 2, \cdots, M$ and $f(z)\leq 2M$ if $z\in\R^3\ba\{z_1, z_2,\cdots, z_M\}$.
\end{itemize}
\end{lemma}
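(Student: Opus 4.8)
The plan is to convert the incidence count $f(z)$ into a linear-algebraic statement about orthogonality of the fixed direction vectors to the difference vectors $z-z_m$. Since $z\in\Pi_{l,m}$ exactly when $\hx_l\cdot(z-z_m)=0$, i.e. when $\hx_l$ lies in the orthogonal complement $(z-z_m)^\perp$, I would write
\[
f(z)=\sum_{m=1}^{M}\#\{\,l\in\{1,\dots,L\}:\ \hx_l\cdot(z-z_m)=0\,\},
\]
and split the analysis according to whether the vectors $z-z_m$ vanish. The vanishing case occurs precisely when $z$ equals one of the $z_m$, so the two bullet points correspond to the two regimes of this sum.

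First I would establish the upper bounds. If $z\notin\{z_1,\dots,z_M\}$ then $z-z_m\neq0$ for every $m$, so each $(z-z_m)^\perp$ is a proper subspace of dimension $n-1$: a line through the origin when $n=2$ and a plane through the origin when $n=3$. The hypotheses are precisely the geometric facts that any line through the origin contains at most one $\hx_l$ (no two directions collinear, $n=2$) and any plane through the origin contains at most two $\hx_l$ (no three directions coplanar, $n=3$). Hence each inner count is at most $n-1$, and summing over the $M$ indices gives $f(z)\le M$ in $\R^2$ and $f(z)\le 2M$ in $\R^3$. For $z=z_{m_0}$, the index $m=m_0$ forces $\hx_l\cdot(z_{m_0}-z_{m_0})=0$ for all $l$, so the $L$ hyperplanes $\Pi_{l,m_0}$ all pass through $z_{m_0}$ and contribute the full value $L$; thus $f(z_{m_0})\ge L$, which is $M+1$ in $\R^2$ and $2M+1$ in $\R^3$.

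The delicate point, which I expect to be the main obstacle, is the sharp equality $f(z_{m_0})=L$ rather than merely $f(z_{m_0})\ge L$. For this one must rule out every surplus incidence coming from an index $m\neq m_0$, i.e. show that no $\hx_l$ is orthogonal to a difference $z_{m_0}-z_m$ of two distinct points; such an orthogonality would make the hyperplane $\Pi_{l,m}$ (built on $z_m$) also pass through $z_{m_0}$ and inflate the count, and the non-collinearity / non-coplanarity hypothesis alone does not exclude it. I would handle this by taking the directions in general position relative to the finite set of differences $\{z_i-z_j:i\neq j\}$: each such nonzero vector forbids only the $\hx$ lying in one hyperplane through the origin, a set of measure zero on $\Sp^{n-1}$, so a generic choice of directions that are pairwise non-collinear (resp. with no three coplanar) simultaneously satisfies $\hx_l\cdot(z_i-z_j)\neq0$ for all $l$ and all $i\neq j$. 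Under this genericity the surplus terms vanish and the equalities $f(z_m)=L$ hold; absent it, only $f(z_m)\ge L$ survives, which together with the upper bound still yields the strict separation $f(z_m)>f(z)$ that the identification of the locations ultimately relies on.
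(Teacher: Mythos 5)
Your upper-bound argument is correct and is in substance the paper's own: for $z\notin\{z_1,\cdots,z_M\}$ each $(z-z_m)^\perp$ is a line (resp.\ plane) through the origin containing at most one (resp.\ two) of the $\hx_l$, whence $f(z)\le M$ in $\R^2$ and $f(z)\le 2M$ in $\R^3$. The paper packages the $\R^3$ case as a proof by contradiction via the pigeonhole principle, which is exactly the contrapositive of your direct count, and its $\R^2$ case is your one-direction-per-line observation; your phrasing is arguably cleaner since it treats both dimensions uniformly. The only place you depart from the paper is the equality $f(z_{m_0})=L$, and the genericity assumption you introduce there is unnecessary: $f$ counts hyperplanes passing through $z$, not index pairs $(l,m)$, and a surplus incidence $\hx_l\cdot(z_{m_0}-z_m)=0$ with $m\neq m_0$ does not inflate the count, because $\Pi_{l,m}$ then has normal $\hx_l$ and contains $z_{m_0}$, hence coincides as a set with $\Pi_{l,m_0}$, which is already among the $L$ pairwise distinct hyperplanes through $z_{m_0}$ (distinct because the normals are pairwise non-collinear; in $\R^3$ this follows from the no-three-coplanar hypothesis). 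Thus every hyperplane of the family through $z_{m_0}$ equals some $\Pi_{l,m_0}$ and $f(z_{m_0})=L$ holds exactly, with no condition on the differences $z_i-z_j$. Your closing remark is nevertheless apt: only the strict separation $f(z_m)\ge L>f(z)$ is used in Theorem \ref{Uniqueness-nknM}, so even under your pair-counting reading the lemma serves its purpose.
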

\begin{proof}
In $\R^2$, since any two directions in $\Theta_{L}$ are not collinear, we obtain $L=M+1$ hyperplanes $\Pi_{l,m},\, l=1,2,\cdots, N$ passing through $z_m, \,m=1,2,\cdots,M$. Thus $f(z_m)=M+1$\, $m=1, 2, \cdots, M$. For any $z\in\R^2\ba\{z_1, z_2,\cdots, z_M\}$, the value $f(z)$ increase only if there is a hyperplane passing through $z$ and some $z_m, m=1, 2, \cdots, M$ simultaneously. There are only $M$ given points $z_m, m=1, 2, \cdots, M$. Therefore, for any $z\in\R^2\ba\{z_1, z_2,\cdots, z_M\}$, there are at most $M$ hyperplanes passing through $z$. This implies $f(z)\leq M$ if $z\in\R^2\ba\{z_1, z_2,\cdots, z_M\}$.

In $\R^3$, since any three directions in $\Theta_{L}$ are not coplanar, we obtain $L=2M+1$ hyperplanes $\Pi_{l,m},\, l=1,2,\cdots, L$ passing through $z_m, \,m=1,2,\cdots,M$. Thus $f(z_m)=L$,\, $m=1, 2, \cdots, M$. It is clear that $f(z)\leq L$ for all $z\in\R^3$. Assume that there exists a point $z^\ast\in\R^3\ba\{z_1, z_2,\cdots, z_M\}$ such that $f(z^\ast)=L$. Then for each observation direction $\hx_l$, there exists some $z_m\in \{z_1, z_2,\cdots, z_M\}$ such that
$\hx_l\cdot (z^\ast-z_m)=0,\, l=1,2,\cdots, 2M+1$.  By the pigeonhole principle, there exists one point $z_m\in \{z_1, z_2,\cdots, z_M\}$ such that for three different observation directions $\hx_{l_1}, \,\hx_{l_2}$ and $\hx_{l_3}$ such that $\hx\cdot (z^\ast-z_m)=0$ for $\hx\in \{\hx_{l_1}, \,\hx_{l_2},\,\hx_{l_3}\}$.
Therefore the observation directions $\hx_{l_1}, \,\hx_{l_2}$ and $\hx_{l_3}$ are coplanar. This leads to a contradiction to our assumption on the observation directions. This completes the proof.

\end{proof}

With the results given in Lemma \ref{lemma1}, we can determine the locations and scattering strengths of the point sources by the multi-frequency far field patterns at finitely many observation directions.

\begin{theorem}\label{Uniqueness-nknM}
We consider $M$ isolated point sources with locations $z_m\in\R^n$ and scattering strengths $\tau_m\in\C\ba\{0\},\, m=1,2,\cdots, M$. Recall $L$ defined by \eqref{L}
and the observation direction set $\Theta_L:=\{\hx_1, \hx_2, \cdots, \hx_L\}\subset \Sp^{n-1}$.
Consider the same assumptions on the observation directions as in the previous Lemma \ref{lemma1}.
Then we have the following uniqueness results.
\begin{itemize}
  \item The locations $z_m$ and scattering strengths $\tau_m,\, m=1,2,\cdots, M$ can be uniquely determined by the far field patterns $u^{\infty}(\pm\hx, k)$ for all $\hx\in\Theta_L$ and $k\in (k_{-}, k_{+})$.
  \item If we further assume that $\tau_m\in \R$, then locations $z_m$ and scattering strengths $\tau_m,\, m=1,2,\cdots, M$ can be uniquely determined by the far field patterns $u^{\infty}(\hx, k)$ for all $\hx\in\Theta_L$ and $k\in (k_{-}, k_{+})$.
\end{itemize}
\end{theorem}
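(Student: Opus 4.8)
The plan is to turn the multi-point problem, one observation direction at a time, into a one-dimensional exponential-sum identification in the frequency variable, to read off from it the hyperplanes that carry sources together with their aggregated strengths, and then to use Lemma~\ref{lemma1} to separate the individual points and the pigeonhole idea in its proof to extract the individual strengths. First I would record the reduction that is hidden in the two hypotheses. By the analyticity in $k$ noted at the start of this section, each datum $u^\infty(\hx,\cdot)$ is determined on all of $\R$ by its values on the band. For point sources $u^\infty(-\hx,k)=u^\infty(\hx,-k)$, so the measurement at $-\hx$ is the analytic continuation of the one at $+\hx$; when $\tau_m\in\R$ it is moreover literally the conjugate, $u^\infty(-\hx,k)=\overline{u^\infty(\hx,k)}$. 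This is exactly why the second assertion needs data only at $+\hx$ while the first is phrased with $\pm\hx$, and in both cases I may assume $u^\infty(\hx,k)$ is known for all $\hx\in\Theta_L$ and all real $k$.

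Next, fix $\hx_l\in\Theta_L$ and view $k\mapsto u^\infty(\hx_l,k)=\sum_{m=1}^{M}\tau_m e^{-ik\,\hx_l\cdot z_m}$ as an exponential sum. Grouping terms by the distinct real numbers $v$ attained by $\hx_l\cdot z_m$ and invoking the linear independence of the exponentials $\{k\mapsto e^{-ikv}\}_{v\in\R}$ on any interval, I recover uniquely the finite set of $v$ with nonzero coefficient together with those coefficients $S_l(v):=\sum_{m:\,\hx_l\cdot z_m=v}\tau_m$. Each such $v$ singles out the hyperplane $\{z:\hx_l\cdot z=v\}$, which passes through at least one source; these are exactly (a subcollection of) the hyperplanes $\Pi_{l,m}$ of \eqref{hyperplane}. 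Letting $l$ range over $1,\dots,L$ produces the recovered arrangement of hyperplanes.

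With the arrangement in hand I would locate the sources by incidence counting. By Lemma~\ref{lemma1} every $z_m$ lies on one hyperplane per direction, so $f(z_m)=L$, while $f(z)\le L-1$ for every other point; hence $\{z:f(z)=L\}=\{z_1,\dots,z_M\}$, which simultaneously fixes the number $M$. For the strengths I would reuse the pigeonhole mechanism of Lemma~\ref{lemma1}: if, for a fixed $z_{m_0}$, every direction placed a second source on $z_{m_0}$'s hyperplane, then among the $M-1$ other sources one would be orthogonal to three directions in $\R^3$ (two in $\R^2$), forcing them coplanar (collinear), against the hypothesis. Thus some direction $\hx_{l^\ast}$ isolates $z_{m_0}$ and $\tau_{m_0}=S_{l^\ast}(\hx_{l^\ast}\cdot z_{m_0})$ is read off directly. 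Equivalently, once the locations are known the linear relations $S_l(v)=\sum_{\hx_l\cdot z_m=v}\tau_m$ determine the $\tau_m$ uniquely: the difference $\delta_m$ of two strength assignments has vanishing aggregate on every hyperplane, so no $z_m$ with $\delta_m\neq0$ can be isolated, and the same pigeonhole count (now with at most $M-1$ competitors, matching $L$) forces all $\delta_m=0$.

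The step I expect to be the genuine difficulty is \emph{hidden cancellation} in the second paragraph: a hyperplane that carries several sources whose strengths happen to sum to zero has $S_l(v)=0$ and is therefore invisible in the recovered arrangement, so a source lying on it is counted on only $L-1$ recovered hyperplanes and may be overlooked by the criterion $f(z)=L$. Note that using $\pm\hx$ does not help here, since $S_{-l}(-v)=S_l(v)$, so the two exponents cancel or survive together. The isolating-direction argument still guarantees that each source is seen in at least the direction $\hx_{l^\ast}$ where its coefficient does not cancel, but pinning the location by intersection requires all of its hyperplanes. I would close this gap either by restricting to the generic (non-degenerate) case, in which the cancellation locus—a codimension-one condition on the $z_m$ coupled with a linear relation among the $\tau_m$—does not occur, or, for a clean pure-uniqueness statement, by comparing two admissible $M$-point configurations with identical data: the difference carries vanishing aggregate on every recovered hyperplane, whence no active point admits an isolating direction, and the Lemma~\ref{lemma1} pigeonhole bound should again force the difference to vanish. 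Matching the combinatorial counts of this difference configuration to the thresholds of Lemma~\ref{lemma1} exactly is the delicate part of the argument.
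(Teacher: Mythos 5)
Your proposal follows essentially the same route as the paper's proof: extend the data in $k$ by analyticity, recover from each direction $\hx_l$ the projections $\hx_l\cdot z_m$ together with their aggregated strengths (the paper does this by integrating $u^{\infty}(\hx,k)+u^{\infty}(-\hx,k)$ over $k\in(0,\infty)$ to produce $2\pi\sum_{m}\tau_m\delta(\hx\cdot z_m)$, which is exactly the information your linear-independence-of-exponentials argument extracts), then invoke Lemma~\ref{lemma1} to pin down the locations by incidence counting, and finally recover each $\tau_m$ by choosing an isolating direction $\hx_l$ with $\hx_l\cdot(z_m-z_{m'})\neq 0$ for all $m'\neq m$ (justified by the same pigeonhole count you spell out) and averaging $u^{\infty}(\pm\hx_l,k)e^{\pm ik\hx_l\cdot z_m}$ over a growing frequency window. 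The treatment of the real-strength case via the extension $u^{\infty}(\hx,k):=\ov{u^{\infty}(\hx,-k)}$ for $k<0$ is also identical.

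The one place where you go beyond the paper is the cancellation issue, and you are right that it is the genuine difficulty: if several sources share a projection $\hx_l\cdot z_m=v$ and their strengths sum to zero, the hyperplane $\hx_l\cdot(z-z_m)=0$ is invisible in the data for that direction, and the criterion $f(z)=L$ can then miss a source. Be aware, however, that the paper's proof does not resolve this either: it passes directly from the superposition $2\pi\sum_m\tau_m\delta(\hx\cdot z_m)$ to the assertion that the values $\hx\cdot z_m$ are uniquely determined, which tacitly assumes no such cancellation; the paper only acknowledges the phenomenon later, in the finitely-many-frequencies setting, where it explicitly restricts to the support set $\mathcal{M}_{\hx}$ and its cardinality $M^{\ast}\leq M$. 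Your two proposed repairs are the natural ones, but as you yourself note, comparing two admissible configurations produces a difference measure supported on up to $2M$ points while $L$ in \eqref{L} is calibrated to $M$ points, so Lemma~\ref{lemma1} alone does not close the argument; any complete fix would have to exploit the recovered aggregated strengths $S_l(v)$ as labels attached to the hyperplanes rather than relying on incidence geometry alone. In short, on its main line your argument matches the paper's, and the gap you flag is present in the paper's own proof as well rather than being a defect specific to your approach.
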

\begin{proof}
Note that the far field pattern $u^{\infty}(\hx, k) = \sum_{m=1}^{M}\tau_m e^{-ik\hx\cdot z_m}$ depends analytically on $k$, thus we have the far field patterns for all frequencies in $(0, \infty)$. Integrating with respect to $k$, we deduce that
\ben
\int_{0}^{\infty} \Big(u^{\infty}(\hx, k) + u^{\infty}(-\hx, k)\Big)dk
&=& \int_{0}^{\infty} \sum_{m=1}^{M}\tau_m\Big(e^{-ik\hx\cdot z_m} + e^{ik\hx\cdot z_m}\Big)dk \cr
&=& \sum_{m=1}^{M}\tau_m\int_{-\infty}^{\infty}e^{-ik\hx\cdot z_m} dk \cr
&=& 2\pi\sum_{m=1}^{M}\tau_m \delta(\hx\cdot z_m), \quad \hx\in \Theta_L,
\enn
where $\delta$ is the Dirac delta function. This implies for each $\hx\in \Theta_L$, the values $\hx\cdot z_m,\,m=1,2,\cdots, M$ are given uniquely by the far field patterns $u^{\infty}(\pm\hx, k)$ for all $\hx\in\Theta_L$ and $k>0$. From this, we can then define the hyperplanes $\Pi_{l,m}: \,\hx_l\cdot (z-z_m)=0,\, l=1,2,\cdots, L,\, m=1,2,\cdots,M$. Using Lemma \ref{lemma1}, we deduce that the locations $z_m,\,m=1,2,\cdots, M$ can be uniquely recovered by the far field patterns $u^{\infty}(\pm\hx, k)$ for all $\hx\in\Theta_L$ and $k>0$.
For any $z_{m^\ast}\in \{z_1, z_2, \cdots, z_M\}$, by the assumption on the observation directions, we can always choose some $\hx_l\in\Theta_L$ such that
$\hx_l\cdot (z_{m^\ast}-z_m)\neq 0$ for all $z_m\neq z_{m^\ast}, \,m=1,2,\cdots, M$. Then by the representation of the far field pattern, we have
\ben
u^{\infty}(\hx_l, k)e^{ik\hx_l\cdot z_{m^\ast}} = \tau_{m^\ast} +\sum_{m=1, m\neq {m^\ast}}^{M} \tau_m e^{-ik\hx_l\cdot(z_m-z_{m^\ast})}.
\enn
For any $K>0$,
\ben
&&\int_{0}^{K}\Big(u^\infty(\hx_l, k)e^{ik\hx_l\cdot z_{m^\ast}} +u^{\infty}(-\hx_l, k)e^{-ik\hx_l\cdot z_{m^\ast}}\Big)dk\cr
&=&2K\tau_{m^\ast}+\sum_{m=1, m\neq {m^\ast}}^{M} \tau_m \int_{-K}^{K}e^{-ik\hx_l\cdot (z_m-z_{m^\ast})}dk.
\enn
Letting $K\rightarrow\infty$, we obtain that the second term  on the right hand side of the above equality tends to $2\pi \sum_{m=1, m\neq {m^\ast}}^{M} \tau_m \delta(\hx_l\cdot(z_m-z_{m^\ast}))$ and therefore vanishes since $\hx_l\cdot (z_{m^\ast}-z_m)\neq 0$ for all $z_m\neq z_{m^\ast}, \,m=1,2,\cdots, M$.
This implies
\ben
\tau_m = \lim_{K\rightarrow\infty} \frac{1}{2K}\int_{0}^{K}\Big(u^{\infty}(\hx_l, k)e^{ik\hx_l\cdot z_m} +u^{\infty}(-\hx_l, k)e^{-ik\hx_l\cdot z_m}\Big)dk.
\enn
In other words, the scattering strengths $\tau_m, \,m=1,2,\cdots, M$ are also uniquely determined by the far field patterns $u^{\infty}(\pm\hx, k)$ for all $\hx\in\Theta_L$ and $k>0$. This completes the proof for the first statement.

If we have the a priori information that $\tau_m\in\R$, then we need less data as given in the second statement. In this case, we define
\ben
u^{\infty}(\hx, k) := \ov{u^{\infty}(\hx, -k)}, \quad k<0,
\enn
where by $\ov{\cdot}$ we denote the complex conjugate. Then the proof follows by similar arguments.
\end{proof}

Inspired by the arguments in \cite{GriesmaierSchmiedecke}, uniqueness can also be established by finitely many properly chosen frequencies.
For a fixed observation direction $\hx\in\Theta_L$, assume that the far field patterns $u^{\infty}(\hx, k_j)$ in \eqref{uinf-source} are given for $J$ equidistant wavenumbers
\be\label{kj}
k_j = j k_{min},\quad j=1,2,\cdots, J,
\en
where
\be\label{kmin}
0<k_{min}\leq\frac{\pi}{2R} \quad\mbox{and}\quad J>2M.
\en
Here, $R>0$ denotes the radius of the smallest ball centered at the origin that contains the locations of the point sources.
The upper bound on $k_{min}$ in \eqref{kmin} implies that $|k_{min}\hx\cdot z_m|<\pi$ for all $m=1,2,\cdots, M$. This further implies that
the value $\hx\cdot z_m$ is uniquely determined by $e^{-ik_{min}\hx\cdot z_m}$.
In the following we develop a rigorous characterization of the projections $\hx\cdot z_1, \,\hx\cdot z_2, \cdots, \hx\cdot z_M$
of the locations $z_1, z_2, \cdots, z_M$ of the point sources from the far field patterns $u^{\infty}(\hx, k_j), j=1,2,\cdots, J$.

For a fixed observation direction $\hx$ different locations $z_{m_1}\neq z_{m_2}$ may yield the same projections $\hx\cdot z_{m_1}=\hx\cdot z_{m_2}$,
and the corresponding summands in \eqref{uinf-source} even cancel if $\tau_{m_1}=-\tau_{m_2}$. In this case, the point sources at $z_{m_1}$ and $z_{m_2}$ would not
contribute to the far field data $u^{\infty}(\hx, k_j), j=1,2,\cdots, J$, and consequently they can not be reconstructed from such far field data.
We introduce the set and its cardinality
\ben
\mathcal {M}_{\hx}:={\rm supp}\left(\sum_{m=1}^{M}\tau_m \delta(\hx\cdot z_m)\right) \subset\R, \quad M^{\ast}:=|\mathcal {M}_{\hx}|.
\enn
Clearly, $M^\ast\leq M$.
Accordingly, we rewrite the far field pattern as
\be
u^{\infty}(\hx, k_j)= \sum_{m=1}^{M^\ast}\tau_m^{\ast}\xi^{j}_{m},\quad j=1,2,\cdots, J,
\en
where $\xi_m:=e^{-ik_{min}f_m}$ for any $f_m\in\mathcal {M}_{\hx}$. The far field patterns $u^{\infty}(\hx, k_j), j=1,2,\cdots, J$, define the Hankel matrix
\ben
U =\left(
     \begin{array}{cccc}
       u^{\infty}(\hx, k_1) & u^{\infty}(\hx, k_2) & \cdots & u^{\infty}(\hx, k_{M+1}) \\
       u^{\infty}(\hx, k_2) & u^{\infty}(\hx, k_3) & \cdots & u^{\infty}(\hx, k_{M+2}) \\
       \vdots & \vdots & \ddots & \vdots \\
       u^{\infty}(\hx, k_{J-M}) & u^{\infty}(\hx, k_{J-M+1}) & \cdots & u^{\infty}(\hx, k_{J}) \\
     \end{array}
   \right)\in\C^{(J-M)\times(M+1)}.
\enn

\begin{lemma}\label{Uproperty}
The Hankel matrix $U$ has a factorization of the form
\be\label{Ufac}
U=V_{J-M}DV^{\top}_{M+1},
\en
where $V_\mathbbm{i}=(v_1, v_2, \cdots, v_{M^{\ast}})\in\C^{\mathbbm{i}\times M^\ast}, \mathbbm{i}\geq2$, denotes a Vandermonde matrix with
$v_m=(1, \xi_m, \xi^2_m,\cdots,\xi^{\mathbbm{i}-1}_m)^{\top}$ for $m=1, 2, \cdots, M^\ast$, and the matrix $D=diag(\tau^\ast_m \xi_m)\in\C^{M^\ast\times M^\ast}$.
Furthermore,
\be\label{rank}
{\rm rank} (U) = {\rm rank} (V_{J-M}) = {\rm rank} (V_{M+1}) = M^\ast
\en
and
\be\label{RangeIdentity}
\mathcal {R} (U) = \mathcal {R} (V_{J-M}).
\en
\end{lemma}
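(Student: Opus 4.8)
The plan is to establish the factorization \eqref{Ufac} by a direct entrywise computation and then to read off both \eqref{rank} and \eqref{RangeIdentity} from the structure of the three factors. First I would verify the factorization: the $(p,q)$-entry of $U$ is $u^{\infty}(\hx, k_{p+q-1})=\sum_{m=1}^{M^\ast}\tau^\ast_m\xi_m^{p+q-1}$, while the $(p,q)$-entry of $V_{J-M}DV^{\top}_{M+1}$ equals $\sum_{m=1}^{M^\ast}\xi_m^{p-1}(\tau^\ast_m\xi_m)\xi_m^{q-1}=\sum_{m=1}^{M^\ast}\tau^\ast_m\xi_m^{p+q-1}$, using $(V_\mathbbm{i})_{p,m}=\xi_m^{p-1}$ and $D=\mathrm{diag}(\tau^\ast_m\xi_m)$. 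The two expressions coincide for all admissible $p,q$, so \eqref{Ufac} holds; this step is purely computational.

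The crucial preliminary for the rank statements is that the nodes $\xi_1,\dots,\xi_{M^\ast}$ are pairwise distinct. Here I would invoke the standing bound $0<k_{min}\leq\pi/(2R)$ from \eqref{kmin}: since each $f_m=\hx\cdot z_m$ obeys $|f_m|\leq R$, any two distinct support points satisfy $|k_{min}(f_m-f_{m'})|\leq 2k_{min}R\leq\pi<2\pi$, so $f_m\neq f_{m'}$ forces $\xi_m=e^{-ik_{min}f_m}\neq e^{-ik_{min}f_{m'}}=\xi_{m'}$. With distinct nodes, the Vandermonde matrices $V_{J-M}$ and $V_{M+1}$ have full column rank $M^\ast$, provided each has at least $M^\ast$ rows; this holds because $M^\ast\leq M$ together with $J>2M$ yields $J-M>M\geq M^\ast$ and $M+1>M\geq M^\ast$.

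Given full column rank of the two outer factors and invertibility of $D=\mathrm{diag}(\tau^\ast_m\xi_m)$ (each aggregated strength $\tau^\ast_m$ is nonzero and $|\xi_m|=1$), the rank chain \eqref{rank} follows: left multiplication by the injective map $V_{J-M}$ preserves rank, so ${\rm rank}(U)={\rm rank}(DV^{\top}_{M+1})={\rm rank}(V^{\top}_{M+1})={\rm rank}(V_{M+1})=M^\ast$. Finally, \eqref{RangeIdentity} comes from the inclusion $\mathcal{R}(U)=\mathcal{R}(V_{J-M}DV^{\top}_{M+1})\subseteq\mathcal{R}(V_{J-M})$ combined with the dimension equality $\dim\mathcal{R}(U)=M^\ast=\dim\mathcal{R}(V_{J-M})$ just established; a subspace contained in another of equal finite dimension must coincide with it.

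I expect the only genuinely delicate point to be the distinctness of the nodes, since that is where the quantitative hypotheses \eqref{kmin} on $k_{min}$ and $R$ enter; everything else is standard linear algebra. I would also take care to emphasize that $\tau^\ast_m$ denotes the nonzero aggregated strength attached to each distinct projection value in $\mathcal{M}_{\hx}$, as this is precisely what guarantees the invertibility of $D$ and hence that no rank is lost in passing through the diagonal factor.
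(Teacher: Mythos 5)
Your proof is correct and follows essentially the same route as the paper: entrywise verification of the factorization, full column rank of the Vandermonde factors from the distinctness of the nodes $\xi_m$ (which you justify more explicitly via the bound $2k_{min}R\leq\pi$), and invertibility of $D$. The only cosmetic difference is in the range identity, where you use a dimension count $\dim\mathcal{R}(U)=M^\ast=\dim\mathcal{R}(V_{J-M})$ while the paper constructs an explicit preimage $\eta$ with $U\eta=\phi$; both are standard and equivalent.
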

\begin{proof}
The factorization \eqref{Ufac} follows by a straightforward calculation. Since $\xi_1, \xi_2, \cdots, \xi_{M^\ast}$ are mutually distinct by construction, the rank
of the Vandermonde matrix $V_\mathbbm{i}\in\C^{\mathbbm{i}\times M^\ast}$ satisfies
\ben
{\rm rank} (V_\mathbbm{i}) = \min \{\mathbbm{i}, M^\ast\}.
\enn
Therefore, we deduce from the assumption $J > 2M \geq 2M^\ast$ that
\ben
{\rm rank} (V_{J-M}) = {\rm rank} (V_{M+1}) = M^\ast.
\enn
Clearly, $D\in \C^{M^\ast\times M^\ast}$ is invertible, and thus the factorization \eqref{Ufac} implies
\ben
{\rm rank} (U) = M^\ast.
\enn

Finally, we show the range identity \eqref{RangeIdentity}. It is clear that $\mathcal {R} (U) \subset \mathcal {R} (V_{J-M})$ from the factorization \eqref{Ufac}.
Conversely, assume that
\ben
\phi = V_{J-M}\psi
\enn
for some $\psi\in \C^{M^\ast\times 1}$. Using the invertibility of $D$ again, we denote by
\ben
\psi^\ast:= D^{-1}\psi.
\enn
Note that ${\rm rank} (V_{M+1}) = M^\ast$, we can always find some $\eta\in \C^{(M+1)\times 1}$ such that
\ben
\psi^\ast = V^{\top}_{M+1}\eta.
\enn
Combining the previous identities, we deduce that $ U\eta = V_{J-M}DV^{\top}_{M+1} \eta= V_{J-M}D \psi^\ast = V_{J-M}\psi = \phi$, i.e., $\phi\in \mathcal {R} (U)$.
This finishes the proof.
\end{proof}

\begin{theorem}\label{uni-hyperplanes}
For a single observation direction $\hx\in \Sp^{n-1}$, the hyperplanes $\Pi_m:\quad \hx\cdot(z-z_m)=0, m=1, 2, \cdots, M$, are uniquely determined by the far field patterns $u^{\infty}(\hx, k_j), j=1,2,\cdots, J$.
\end{theorem}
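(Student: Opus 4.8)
The plan is to reduce the determination of the hyperplanes to the recovery of the Vandermonde nodes $\xi_1,\ldots,\xi_{M^\ast}$ from the single data matrix $U$, and then to invert the relation $\xi_m=e^{-ik_{min}f_m}$ to obtain the projections $f_m=\hx\cdot z_m$. The engine of the argument is the range identity $\mathcal{R}(U)=\mathcal{R}(V_{J-M})$ from Lemma \ref{Uproperty}, which converts the problem into a purely linear-algebraic question about Vandermonde columns. Since $U$ is assembled directly from the measured values $u^{\infty}(\hx,k_j)$, its range (equivalently the orthogonal projector onto it) is itself a datum; so it suffices to show that $\mathcal{R}(U)$ pins down the node set $\{\xi_1,\ldots,\xi_{M^\ast}\}$, and that each node pins down one projection.

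First I would set up a MUSIC-type indicator. For any candidate $\xi\in\C$ with $|\xi|=1$, introduce the test column $\phi_\xi:=(1,\xi,\xi^2,\ldots,\xi^{J-M-1})^{\top}\in\C^{(J-M)\times 1}$, which has exactly the same length as the columns $v_m$ of $V_{J-M}$. The goal is the characterization
\[
\phi_\xi\in\mathcal{R}(U)\quad\Longleftrightarrow\quad \xi\in\{\xi_1,\ldots,\xi_{M^\ast}\}.
\]
The easy implication follows from \eqref{RangeIdentity}: if $\xi=\xi_m$ then $\phi_\xi=v_m\in\mathcal{R}(V_{J-M})=\mathcal{R}(U)$. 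For the converse, suppose $\xi\notin\{\xi_m\}$; then $\phi_\xi,v_1,\ldots,v_{M^\ast}$ are $M^\ast+1$ Vandermonde columns with pairwise distinct nodes and common length $J-M$. The hypothesis $J>2M\ge 2M^\ast$ in \eqref{kmin} gives $J-M\ge M+1\ge M^\ast+1$, so selecting any $M^\ast+1$ of the $J-M$ entries yields an invertible square Vandermonde matrix; hence these columns are linearly independent and $\phi_\xi\notin\mathrm{span}\{v_1,\ldots,v_{M^\ast}\}=\mathcal{R}(V_{J-M})=\mathcal{R}(U)$, a contradiction. This establishes the characterization, and therefore the finite node set is uniquely read off from $\mathcal{R}(U)$, i.e. from the data.

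It then remains to pass from nodes to hyperplanes. Writing $\xi_m=e^{-ik_{min}f_m}$ with $f_m\in\mathcal{M}_{\hx}$ and $|f_m|=|\hx\cdot z_m|\le|z_m|\le R$, the bound $k_{min}\le\pi/(2R)$ from \eqref{kmin} forces $k_{min}f_m\in[-\pi/2,\pi/2]\subset(-\pi,\pi)$, on which $t\mapsto e^{-it}$ is injective. Consequently each node determines $f_m=-\tfrac{1}{k_{min}}\arg(\xi_m)$ (principal branch) uniquely, that is, the scalar $\hx\cdot z_m$, which is precisely the datum fixing the hyperplane $\Pi_m:\hx\cdot(z-z_m)=0$. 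Chaining these steps gives a well-defined map from the data to the family of (distinct) hyperplanes, which is the claimed uniqueness.

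The main obstacle is the converse half of the MUSIC characterization, where one must rule out any spurious $\xi$ slipping into the range; this is exactly where the counting assumption $J>2M$ does its work, by guaranteeing enough rows ($J-M\ge M^\ast+1$) that appending one more distinct Vandermonde column strictly raises the rank. A secondary point I would state carefully, in view of the cancellation phenomenon discussed before the theorem, is that the recovered set corresponds to $\mathcal{M}_{\hx}$ and hence to the effective projections; from a single direction this is the best attainable, and it coincides with the full set of genuine hyperplanes whenever the combined strengths $\tau_m^\ast$ do not vanish.
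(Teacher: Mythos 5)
Your proposal is correct and follows essentially the same route as the paper's own proof: the MUSIC-type test vector $\phi_\xi$, the range characterization $\phi_\xi\in\mathcal{R}(U)\Leftrightarrow\xi\in\{\xi_1,\ldots,\xi_{M^\ast}\}$ via the range identity \eqref{RangeIdentity} and the rank of the augmented Vandermonde matrix, and finally the injectivity of $t\mapsto e^{-ik_{min}t}$ guaranteed by \eqref{kmin}. Your explicit remark that the recovered set is $\mathcal{M}_{\hx}$ (so cancellations $\tau_{m_1}=-\tau_{m_2}$ with equal projections are invisible) is a careful reading of the discussion preceding the theorem, and your inequality $J-M\ge M+1\ge M^\ast+1$ is in fact stated more accurately than the paper's strict version.
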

\begin{proof}
Let $B_R$ be the ball centered at the origin with radius $R$. For a point $z\in B_R\subset \R^n$, define $\xi:= e^{-ik_{min}\hx\cdot z}$ and let $\phi_z:=(1, \xi, \xi^2,\cdots,\xi^{J-M-1})^{\top}\in \C^{(J-M)\times 1}$.
We claim that $\phi_z\in \mathcal {R} (U)$ if and only if $\xi\in \{\xi_1, \xi_2, \cdots, \xi_{M^\ast}\}$.

Let first $\xi\in \{\xi_1, \xi_2, \cdots, \xi_{M^\ast}\}$. Then clearly $\phi_z\in \mathcal {R} (V_{J-M})$ by the construction of the matrix $V_{J-M}$. The range identity \eqref{RangeIdentity} in Lemma \ref{Uproperty} implies that $\phi_z\in \mathcal {R} (U)$.

Let now $\xi\notin \{\xi_1, \xi_2, \cdots, \xi_{M^\ast}\}$. The assumption $J>2M$ implies that $J-M>M+1>M^\ast + 1$. Then the novel Vandermonde matrix $(V_{J-M}, \phi_z)\in\C^{(J-M)\times(M^\ast+1)}$ has rank $M^{\ast}+1$. This implies that $\phi_z\notin \mathcal {R} (V_{J-M})$ and consequently $\phi_z\notin \mathcal {R} (U)$
by using the range identity \eqref{RangeIdentity} again.

If $\phi_z\in \mathcal {R} (U)$, then $\xi= \xi_m$ for some $1\leq m\leq M$, i.e., $e^{-ik_{min}\hx\cdot z} = e^{-ik_{min}\hx\cdot z_m}$. By the assumption \eqref{kmin} of the smallest wavenumber $k_{min}$, we deduce that
$\hx\cdot z = \hx\cdot z_m$.
That is, the hyperplane $\Pi_m:\quad \hx\cdot(z-z_m)=0$ is uniquely determined.
\end{proof}

Combining Theorem \ref{uni-hyperplanes} and Lemma \ref{lemma1}, we immediately have the following uniqueness results on the determination of the locations by the
far field patterns with finitely many observation directions and finitely many frequencies.

\begin{theorem}
Let $k_j$ be the wave number given by \eqref{kj} satisfying \eqref{kmin}. Let $L$ be given as in \eqref{L}, we consider $L$ observation directions $\hx_l, l=1,2,\cdots, L$ satisfying the same condition as in Lemma \ref{lemma1}.  Then the locations $z_1, z_2,\cdots, z_M$ can be uniquely determined by the far field patterns $u^{\infty}(\hx_l, k_j)$, $l=1,2,\cdots, L, \,j=1,2,\cdots, J$.
\end{theorem}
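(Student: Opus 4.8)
The plan is to stitch together the single-direction recovery in Theorem~\ref{uni-hyperplanes} with the combinatorial counting in Lemma~\ref{lemma1}. First I would process each observation direction separately: for fixed $\hx_l$, $l=1,2,\cdots,L$, the hypotheses $0<k_{min}\le \pi/(2R)$ and $J>2M$ in \eqref{kmin} let me apply Theorem~\ref{uni-hyperplanes} to the finite sample $u^{\infty}(\hx_l,k_j)$, $j=1,2,\cdots,J$, and thereby recover the hyperplanes $\hx_l\cdot(z-z_m)=0$ on which the sources sit, i.e.\ the projections $\hx_l\cdot z_m$ (the threshold $|k_{min}\hx_l\cdot z_m|<\pi$ guaranteeing that $e^{-ik_{min}\hx_l\cdot z_m}$ determines $\hx_l\cdot z_m$). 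Running over all $L$ directions assembles the full family $\{\Pi_{l,m}\}$ of \eqref{hyperplane}, which is exactly the input required by Lemma~\ref{lemma1}.

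I would then read off the locations from this family by the argmax criterion of Lemma~\ref{lemma1}. With $L$ chosen as in \eqref{L} and the directions in general position (any two non-collinear in $\R^2$, any three non-coplanar in $\R^3$), that lemma gives the sharp dichotomy $f(z_m)=L$ at each true source and $f(z)\le L-1$ at every other point. Hence the source set is intrinsically characterised as $\{z:f(z)=L\}$, the set of points meeting the maximal number of recovered hyperplanes; since $f$ is computed purely from the recovered hyperplanes, and these are determined by the data, the locations $z_1,\cdots,z_M$ are uniquely determined.

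The step I expect to be the true obstacle is the degeneracy noted just before Theorem~\ref{uni-hyperplanes}: for a fixed $\hx_l$ the map $z_m\mapsto\hx_l\cdot z_m$ may fail to be injective, and the range test recovers only the $M^\ast\le M$ projections surviving in $\mathcal{M}_{\hx_l}$. Coincident projections without cancellation are benign --- two sources then share one recovered hyperplane, each still contributing a single distinct hyperplane in direction $l$, and the distinct normals across $l$ keep $f(z_m)=L$ intact. The dangerous case is exact cancellation, $\hx_l\cdot z_{m_1}=\hx_l\cdot z_{m_2}$ with $\tau_{m_1}+\tau_{m_2}=0$, which deletes the hyperplane through those sources in direction $l$ and can pull $f(z_{m_1})$ down to $L-1$, destroying the argmax separation. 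I would dispose of this exactly as the general-position hypothesis on $\Theta_L$ is meant to operate: cancellation in $\hx_l$ forces $\hx_l\perp(z_{m_1}-z_{m_2})$, an alignment that can hold for at most finitely many directions, so the condition on $\Theta_L$ (or, alternatively, a genericity or sign assumption on the strengths $\tau_m$ ruling out vanishing partial sums) can be taken to exclude it; once each source contributes one recovered hyperplane per direction, the counting argument of the second paragraph applies verbatim.
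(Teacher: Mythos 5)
Your argument is exactly the paper's: the authors offer no written proof beyond the sentence ``Combining Theorem~\ref{uni-hyperplanes} and Lemma~\ref{lemma1}, we immediately have the following,'' and your two-step plan (recover the projections $\hx_l\cdot z_m$ direction by direction via the Hankel-matrix range test, then characterise the sources as the points where $f(z)=L$) is precisely that combination. Your discussion of the cancellation degeneracy is a genuine improvement in care over the paper, which flags the phenomenon before Theorem~\ref{uni-hyperplanes} but silently ignores it here. One correction, though: the hypothesis on $\Theta_L$ inherited from Lemma~\ref{lemma1} (pairwise non-collinear in $\R^2$, triple-wise non-coplanar in $\R^3$) does \emph{not} rule out some $\hx_l$ being perpendicular to $z_{m_1}-z_{m_2}$ --- it only bounds how many directions can be, and even a single lost hyperplane drops $f(z_{m_1})$ to $L-1$, which is exactly the ceiling Lemma~\ref{lemma1} allows for spurious points, so the argmax separation really does fail. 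The honest fix is the one you give in parentheses: an additional assumption that no partial sum $\sum_{m\in G}\tau_m$ vanishes over a group $G$ of sources sharing a projection (automatic if, say, all $\tau_m$ have positive real part); with that in place your counting argument closes the proof, and without it the theorem as stated in the paper is also incomplete.
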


\begin{theorem}
Let $k_j$ be the wave number given by \eqref{kj} satisfying \eqref{kmin}. Let $\hx\in \Sp^{n-1}$ be such that $\hx\cdot(z_{m_1}-z_{m_2})\neq 0$ if $m_1\neq m_2$.  Given the locations $z_1, z_2,\cdots, z_M$, the corresponding scattering strengths $\tau_1, \tau_2,\cdots, \tau_M$ can be uniquely determined by the multi-frequency far field patterns $u^{\infty}(\hx, k_j)$, $j=1,2,\cdots, J$ at the fixed observation direction $\hx$.
\end{theorem}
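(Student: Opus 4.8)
The plan is to reduce the recovery of the strengths to a square linear system with a Vandermonde coefficient matrix. Since the locations $z_1, \ldots, z_M$, the direction $\hx$, and the base wavenumber $k_{min}$ are all known, the quantities $\xi_m := e^{-ik_{min}\hx\cdot z_m}$ are explicitly computable for $m = 1, \ldots, M$. Writing $k_j = j k_{min}$, the representation \eqref{uinf-source} gives $u^{\infty}(\hx, k_j) = \sum_{m=1}^{M}\tau_m\,\xi_m^{\,j}$ for $j = 1, \ldots, J$. Note that the present hypothesis $\hx\cdot(z_{m_1}-z_{m_2})\neq 0$ for $m_1\neq m_2$ forces $M^\ast = M$ in the notation preceding Lemma \ref{Uproperty}, so no cancellation of summands can occur. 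First I would isolate the $j=1$ factor by setting $\wi\tau_m := \tau_m\xi_m$, which turns the data relations for $j = 1, \ldots, M$ into $u^{\infty}(\hx, k_j) = \sum_{m=1}^{M}\wi\tau_m\,\xi_m^{\,j-1}$, a standard Vandermonde system in the unknowns $\wi\tau_1, \ldots, \wi\tau_M$.

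The crucial step --- and the only place where the two hypotheses genuinely enter --- is to verify that the nodes $\xi_1, \ldots, \xi_M$ are pairwise distinct. The assumption $\hx\cdot(z_{m_1}-z_{m_2})\neq 0$ guarantees that the projections $\hx\cdot z_m$ are mutually distinct. To upgrade this to distinctness of the exponentials I would use the bound $|k_{min}\hx\cdot z_m|<\pi$ supplied by \eqref{kmin} together with the definition of $R$: if $\xi_{m_1}=\xi_{m_2}$ for some $m_1\neq m_2$, then $k_{min}\hx\cdot(z_{m_1}-z_{m_2})\in 2\pi\Z$, whereas the two bounds give $|k_{min}\hx\cdot(z_{m_1}-z_{m_2})| < 2\pi$; the only multiple of $2\pi$ in $(-2\pi, 2\pi)$ is $0$, so $\hx\cdot z_{m_1}=\hx\cdot z_{m_2}$, a contradiction. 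Hence the $\xi_m$ are distinct.

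With distinct nodes, the $M\times M$ coefficient matrix $W = (\xi_m^{\,j-1})_{1\le j,m\le M}$ is a classical Vandermonde matrix with determinant $\prod_{1\le m_1<m_2\le M}(\xi_{m_2}-\xi_{m_1})\neq 0$, hence invertible. Therefore the vector $(\wi\tau_1, \ldots, \wi\tau_M)^{\top}$ is uniquely determined from the data $(u^{\infty}(\hx, k_1), \ldots, u^{\infty}(\hx, k_M))^{\top}$, and since each $\xi_m\neq 0$ the original strengths are recovered uniquely as $\tau_m = \wi\tau_m/\xi_m$. I do not expect a genuine obstacle: once the distinctness of the nodes is established, invertibility of the Vandermonde matrix is automatic, and the condition $J>2M$ from \eqref{kmin} already provides more frequencies than the $J\ge M$ actually needed for uniqueness (the surplus being available for a least-squares fit in practice). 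The only point requiring care is the short chain $|\hx\cdot z_m|\le |z_m|\le R$ and $k_{min}\le \pi/(2R)$ justifying the bound used in the distinctness argument.
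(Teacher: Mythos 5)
Your proof is correct and follows essentially the same route as the paper's: both recast the data as a Vandermonde system in the nodes $e^{-ik_{min}\hx\cdot z_m}$ and conclude unique solvability once those nodes are pairwise distinct. The only differences are cosmetic --- you solve a square $M\times M$ subsystem instead of the full rank-$M$ rectangular system $\mathbb{V}\mathbb{T}=\mathbb{U}$ used in the paper, and you spell out the step (which the paper leaves implicit in the phrase ``distinct by the assumption on $\hx$'') that the bound $k_{min}\leq \pi/(2R)$ is what upgrades distinctness of the projections $\hx\cdot z_m$ to distinctness of the exponentials.
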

\begin{proof}
Recall that the far field patterns are given by
\be\label{uinfkj}
u^{\infty}(\hx, k_j)= \sum_{m=1}^{M}\tau_m\eta^{j}_{m},\quad j=1,2,\cdots, J,
\en
where $\eta_m:=e^{-ik_{min}\hx\cdot z_m}$. We rewrite the equations \eqref{uinfkj} in the matrix form
\be\label{tauequ}
\mathbb{V}\mathbb{T}=\mathbb{U},
\en
where $\mathbb{T}=(\tau_1, \tau_2, \cdots, \tau_M)^{\top}$, $\mathbb{V}= (\mathbbm{v}_1, \mathbbm{v}_2, \cdots, \mathbbm{v}_M)\in \C^{J\times M}$ denotes a Vandermonde matrix with $\mathbbm{v}_m=(\eta_m, \eta_m^2, \cdots, \eta_m^{J})^{\top}$, $m=1,2,\cdots, M$ and $\mathbb{U}=(u^{\infty}(\hx, k_1), u^{\infty}(\hx, k_2), \cdots, u^{\infty}(\hx, k_J))^{\top}$.
Since $\eta_1, \eta_2, \cdots, \eta_M$ are distinct by the assumption on $\hx$, we have that
\ben
{\rm rank}(\mathbb{V})= {\rm min}\{J, M\}=M.
\enn
Therefore, the equation \eqref{tauequ} is uniquely solvable.
In other words, the scattering strengths $\tau_1, \tau_2,\cdots, \tau_M$ can be uniquely determined by the far field patterns $u^{\infty}(\hx, k_j)$, $j=1,2,\cdots, J$. The proof is complete.
\end{proof}
\quad\\

Finally, we turn to the scattering of plane waves by point like scatterers. Recall that the scattered field and the corresponding far field pattern are given by
\ben
u^{s}(x,\hth,k)= \sum_{m=1}^{M}\tau_m u^{i}(z_m,\hth,k)\Phi_k(x,z_m), \quad x\in\R^n\ba\{z_1, z_2, \cdots, z_M\}
\enn
and
\ben
u^{\infty}(\hx, \hth, k)= \sum_{m=1}^{M} \tau_m e^{-ikz_m\cdot(\hx-\hth)},\quad \hx,\hth\in \Sp^{n-1},
\enn
respectively. We collect the uniqueness results in the following theorem. We omit the proof since it is similar to the case of inverse source scattering problems.

\begin{theorem}
We consider the scattering of plane waves by point like scatterers. Then we have the following uniqueness results.
\begin{description}
  \item[M=1] \begin{itemize}
  \item Let $k>0$ be fixed. For any single sensor $\hx\in\Sp^{n-1}$ or $x\in\R^3\ba\{z_1\}$, we have
     \be\label{11-obstacle}
     \tau_1=u^\infty(\hx, \hth, k) e^{ikz_1\cdot(\hx-\hth)}, \qquad |x-z_1|=\frac{|\tau_1|}{4\pi|u^s(x,\hth, k)|}.
     \en
  \item Let $k>0$ be fixed. If we know the location $z_1$ in advance, then the strength $\tau_1$ is uniquely determined by the scattered field $u^s(x,\hth, k)$ at a single sensor $ x\in\R^n\ba\{z_1\}$ or the far field pattern $u^\infty(\hx,\hth, k)$ at a single observation direction $\hx\in\Sp^{n-1}$.
  \item Let $k>0$ be fixed. If we know the modulus $|\tau_1|$ in advance, then the location $z_1\in \R^3$ is uniquely determined by the phaseless scattered field $|u^s(x,\hth, k)|$ at four sensors $x\in\{x_1, x_2, x_3, x_4\}\subset\R^3\ba\{z_1\}$, which are not coplanar.
  \item In $\R^3$, both the location $z_1$ and the strength $\tau_1$ can be uniquely determined by the multi-frequency scattered fields $u^s(x,\hth, k), k\in (k_{-}, k_{+})$ at four sensors $x\in\{x_1, x_2, x_3, x_4\}\subset\R^3\ba\{z_1\}$, which are not coplanar.
  \item In $\R^n$, both the location $z_1$ and the strength $\tau_1$ can be uniquely determined by the multi-frequency far field patterns $u^\infty(\hx,\hth,k), k\in (k_{-}, k_{+})$ at $n$ pairs of linearly independent directions $\phi_j:=\hx_j-\hth_j, \,j=1,2,\cdots, n$.
      \end{itemize}
  \item[M>1] Recall $L$ and $\Theta_L$ defined by \eqref{L} and \eqref{thetaL}, respectively. Then we have the following results.
    \begin{itemize}
  \item The locations $z_m$ and scattering strengths $\tau_m,\, m=1,2,\cdots, M$ can be uniquely determined by the far field patterns $u^{\infty}(\hx, -\hx, k)$ and $u^{\infty}(-\hx, \hx, k)$ for all $\hx\in\Theta_L$ and $k\in (k_{-}, k_{+})$.
  \item If we further assume that $\tau_m\in \R$, then locations $z_m$ and scattering strengths $\tau_m,\, m=1,2,\cdots, M$ can be uniquely determined by the far field patterns $u^{\infty}(\hx, -\hx, k)$ for all $\hx\in\Theta_L$ and $k\in (k_{-}, k_{+})$.
  \item We consider the following wave numbers
  \be\label{kj}
k_j = j k_{min},\quad j=1,2,\cdots, J,
\en
where
\be\label{kmin}
0<k_{min}\leq\frac{\pi}{4R} \quad\mbox{and}\quad J>2M.
\en
Here, $R>0$ denotes the radius of the smallest ball centered at the origin that contains the locations of the point like scatterers. Then the locations $z_m$, $m=1,2,\cdots, M$ can be uniquely determined by the backscattering far field patterns $u^{\infty}(\hx, -\hx, k_j), j=1,2,\cdots, J$ at finitely many observation directions $\hx\in\Theta_L$. Furthermore, taking $\hx\in\Sp^{n-1}$ such that $\hx\cdot z_{m_1}\neq \hx\cdot z_{m_2}$ if $m_1\neq m_2$, then the scattering strengths $\tau_m, m=1,2,\cdots, M$ can be uniquely determined by the far field patterns far field patterns $u^{\infty}(\hx, -\hx, k_j), j=1,2,\cdots, J$ at the fixed observation directions $\hx\in\Sp^{n-1}$.
    \end{itemize}
\end{description}
\end{theorem}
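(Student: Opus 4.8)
The plan is to exploit the fact that, writing $\phi:=\hx-\hth$, the scatterer far field $u^{\infty}(\hx,\hth,k)=\sum_{m=1}^{M}\tau_m e^{-ik\phi\cdot z_m}$ has exactly the algebraic form of the source far field $\sum_{m=1}^{M}\tau_m e^{-ik\hx\cdot z_m}$, with the single unit vector $\hx$ replaced by the difference vector $\phi$. Consequently every far-field argument of Theorems \ref{1k1M}, \ref{mk1M}, \ref{Uniqueness-nknM} and of Lemma \ref{Uproperty}/Theorem \ref{uni-hyperplanes} transfers once $\hx\cdot z_m$ is replaced by $\phi\cdot z_m$. For the near field I would instead use that, for $M=1$, $u^{s}(x,\hth,k)=\tau_1 e^{ik\hth\cdot z_1}\Phi_k(x,z_1)$, so that the incident wave enters only through the unimodular modulation $e^{ik\hth\cdot z_1}$.

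For the $M=1$ far-field statements I would repeat the proofs of Theorems \ref{1k1M} and \ref{mk1M} verbatim with $\phi_j:=\hx_j-\hth_j$ in place of the observation directions: the single-frequency identity $\tau_1=u^{\infty}(\hx,\hth,k)e^{ikz_1\cdot(\hx-\hth)}$ is immediate by inversion, and in the multi-frequency case the integration-in-$k$ device applied to $u^{\infty}(\hx_j,\hth_j,k)/u^{\infty}(\hx_j,\hth_j,k_{-})=e^{-i(k-k_{-})\phi_j\cdot z_1}$ recovers the $n$ projections $\phi_j\cdot z_1$, whence the linear independence of $\phi_1,\dots,\phi_n$ fixes $z_1$ and then $\tau_1$. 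For the phaseless near-field statements the essential simplification is that $|u^{s}(x,\hth,k)|=|\tau_1|/(4\pi|x-z_1|)$ in $\R^3$, because the factor $|e^{ik\hth\cdot z_1}|=1$ drops out; the four non-coplanar spheres $\pa B_{r_j}(x_j)$ with $r_j=|\tau_1|/(4\pi|u^{s}(x_j,\hth,k)|)$ then locate $z_1$ exactly by the intersection argument of Theorem \ref{1k1M}.

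The one place where the source proof does not transfer, and thus the main obstacle, is the multi-frequency near-field statement for $M=1$. Here the phase ratio is $u^{s}(x,\hth,k)/u^{s}(x,\hth,k_{-})=e^{i(k-k_{-})(|x-z_1|+\hth\cdot z_1)}$, so the integration device of Theorem \ref{mk1M} returns the combination $p(x):=|x-z_1|+\hth\cdot z_1$, not the distance alone: the fixed incident direction couples the unknown projection $\hth\cdot z_1$ into every measured phase. To decouple them I would bring in the modulus, which supplies the distance ratios $|x_j-z_1|/|x_1-z_1|=|u^{s}(x_1,\hth,k)|/|u^{s}(x_j,\hth,k)|$; subtracting the phase combinations across sensors cancels the common term $\hth\cdot z_1$ and leaves the distance differences $p(x_j)-p(x_1)=|x_j-z_1|-|x_1-z_1|$, which together with the known ratios determine each $|x_j-z_1|$ (in the exceptional case where all four distances coincide, $z_1$ is forced to be the circumcenter of the non-coplanar tetrahedron $x_1x_2x_3x_4$). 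The location $z_1$ then follows from the four non-coplanar distances and $\tau_1$ from the single-frequency identity; throughout one assumes the non-degeneracy $u^{s}(x,\hth,k_{-})\neq u^{s}(x,\hth,k_{+})$ as in Theorem \ref{mk1M}.

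Finally, for $M>1$ I would run the source machinery on the backscattering data $\hx=-\hth$, noting that $u^{\infty}(\hx,-\hx,k)=\sum_{m}\tau_m e^{-2ik\hx\cdot z_m}$ and $u^{\infty}(-\hx,\hx,k)=\sum_{m}\tau_m e^{2ik\hx\cdot z_m}$ play precisely the roles of $u^{\infty}(\pm\hx,k)$ in Theorem \ref{Uniqueness-nknM}, the only difference being the factor $2$ in the exponent. The Fourier/Dirac computation $\int_{0}^{\infty}(\cdot)\,dk$ then recovers the values $\hx\cdot z_m$, Lemma \ref{lemma1} recovers the locations, and the limiting-average formula recovers the strengths; the real-$\tau_m$ variant again uses the conjugate extension. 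The same factor $2$ is exactly what replaces the source threshold $k_{min}\le\pi/(2R)$ by $k_{min}\le\pi/(4R)$ in \eqref{kmin}, since injectivity is now required for the doubled projections $2k_{min}\hx\cdot z_m$; with this adjustment the Hankel factorization and range identity of Lemma \ref{Uproperty} and the argument of Theorem \ref{uni-hyperplanes} apply unchanged with $\xi_m=e^{-2ik_{min}\hx\cdot z_m}$, and the Vandermonde solvability argument recovers the strengths at a fixed direction.
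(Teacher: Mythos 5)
Your proposal is correct and follows the route the paper intends: the paper omits this proof entirely, appealing to similarity with the source case, and your transfer of the arguments of Theorems \ref{1k1M}, \ref{mk1M} and \ref{Uniqueness-nknM} via the substitutions $\hx\mapsto\hx-\hth$ (far field), $k\mapsto 2k$ in the backscattering exponent (which is exactly what turns the threshold $\pi/(2R)$ into $\pi/(4R)$), and $\xi_m=e^{-2ik_{min}\hx\cdot z_m}$ in the Hankel/Vandermonde argument is the intended mechanism. Where you go beyond the paper, usefully, is the fourth bullet of the $M=1$ case: you are right that the ratio $u^s(x,\hth,k)/u^s(x,\hth,k_-)=e^{i(k-k_-)(|x-z_1|+\hth\cdot z_1)}$ means the integration device of Theorem \ref{mk1M} only returns the combination $|x-z_1|+\hth\cdot z_1$, so the ``similar proof'' is not literally similar there; your repair --- using the moduli to obtain the distance ratios $|x_j-z_1|/|x_1-z_1|$ and the phase differences to obtain $|x_j-z_1|-|x_1-z_1|$, with the circumcenter of the non-degenerate tetrahedron as the fallback when all four distances coincide --- closes this gap correctly. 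A small caveat: like Theorem \ref{mk1M}, that bullet implicitly needs a non-degeneracy hypothesis of the type $u^s(x,\hth,k_-)\neq u^s(x,\hth,k_+)$, which the theorem statement does not record; your proof makes it explicit, which is the honest reading. Everything else --- the single-frequency inversion formulas, the phaseless four-sphere intersection (valid because $|e^{ik\hth\cdot z_1}|=1$ drops out of the modulus), the Dirac-delta computation and limiting average for the strengths when $M>1$, and the conjugate extension for real $\tau_m$ --- matches the source-case proofs line for line.
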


\section{Numerical methods}
\label{NumMethods}
This section aims to introduce some novel numerical methods for identifying the locations and scattering strengths of the point object.
Some of the numerical methods are originated from the constructive uniqueness proof in the previous section.

\subsection{Numerical methods for Point sources}
We begin with the scattering by point sources. If $M=1$, given $|\tau_1|$, we introduce the following indicator to locate the position of $z_1$.
\be\label{Iz1}
I_{1}(z):=\frac{1}{\sum_{j=1}^{4}\Big|\frac{|x_j-z|}{|\tau_1|}-\frac{1}{4\pi|u^s(x_j, k)|}\Big|}
\en
where $x_j,\,j=1,2,3,4$ are four points in $\R^3$ such that they are not coplanar.
By the representation of the scattered field given in  \eqref{us-source}, we know that $I_{1}(z)\rightarrow \infty$ if $z\rightarrow z_1$. Thus $I_{1}(z)$ blows up at $z=z_1$.

The proof of Theorem \ref{1k1M} gives a constructive way to look for the the position of $z_1$. Based on this, we introduce a simple geometrical method to reconstruct a point in $\R^3$ from four distances to the given points. The procedure is actually an extension of the geometrical method to reconstruct a point from three distances to the given points in $\R^2$, which is the key idea of phase retrieval \cite{JiLiu-elastic,JiLiu-electromagnetic,JiLiuZhang-source}.
However, the procedure for reconstruct a point in $\R^3$ is much more technical.

Let $x_j, j=1,2,3,4$ be four given points in $\R^3$  such that they are not coplanar, denote by $r_j:=|x_j-z|$. The following scheme provides a constructive way for determining the unknown point $z$.

\begin{figure}[htbp]
  \centering
  \subfigure[\textbf{Step one.}]{
    \includegraphics[height=1.3in,width=1.6in]{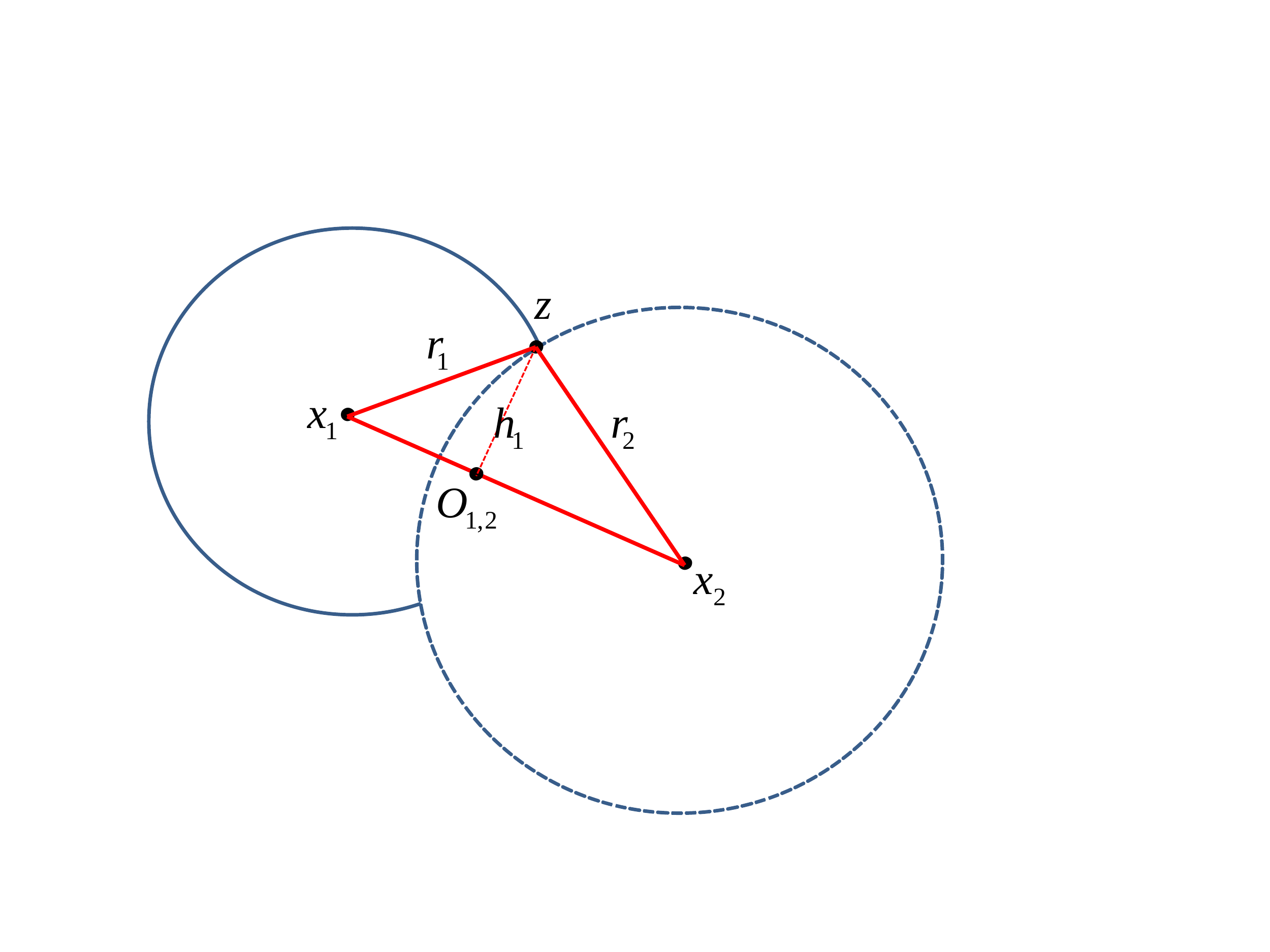}}\qquad\qquad
  \subfigure[\textbf{Step two.}]{
    \includegraphics[height=1.6in,width=1.6in]{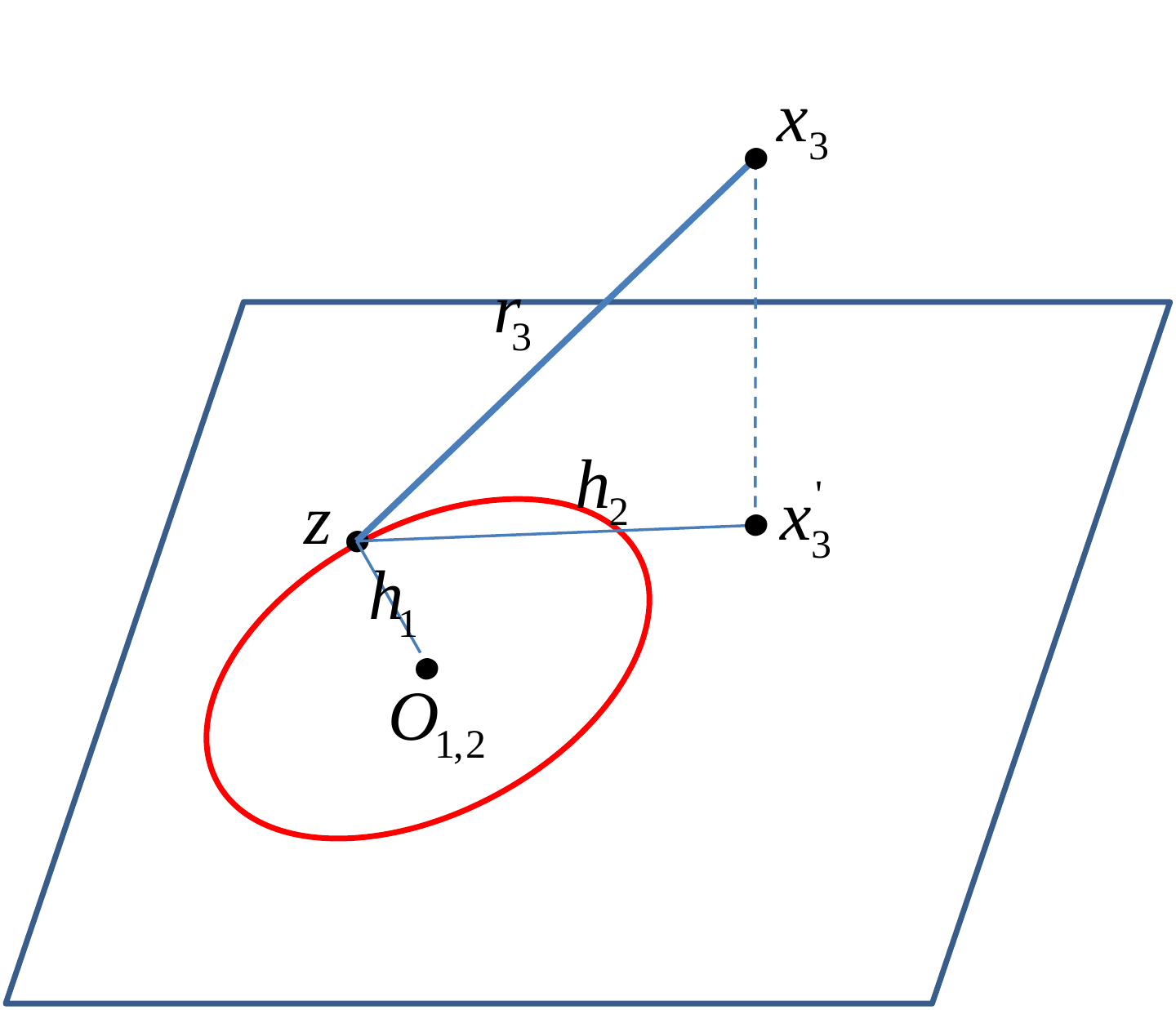}}\qquad\qquad
  \subfigure[\textbf{Step three.}]{
    \includegraphics[height=1.3in,width=1.6in]{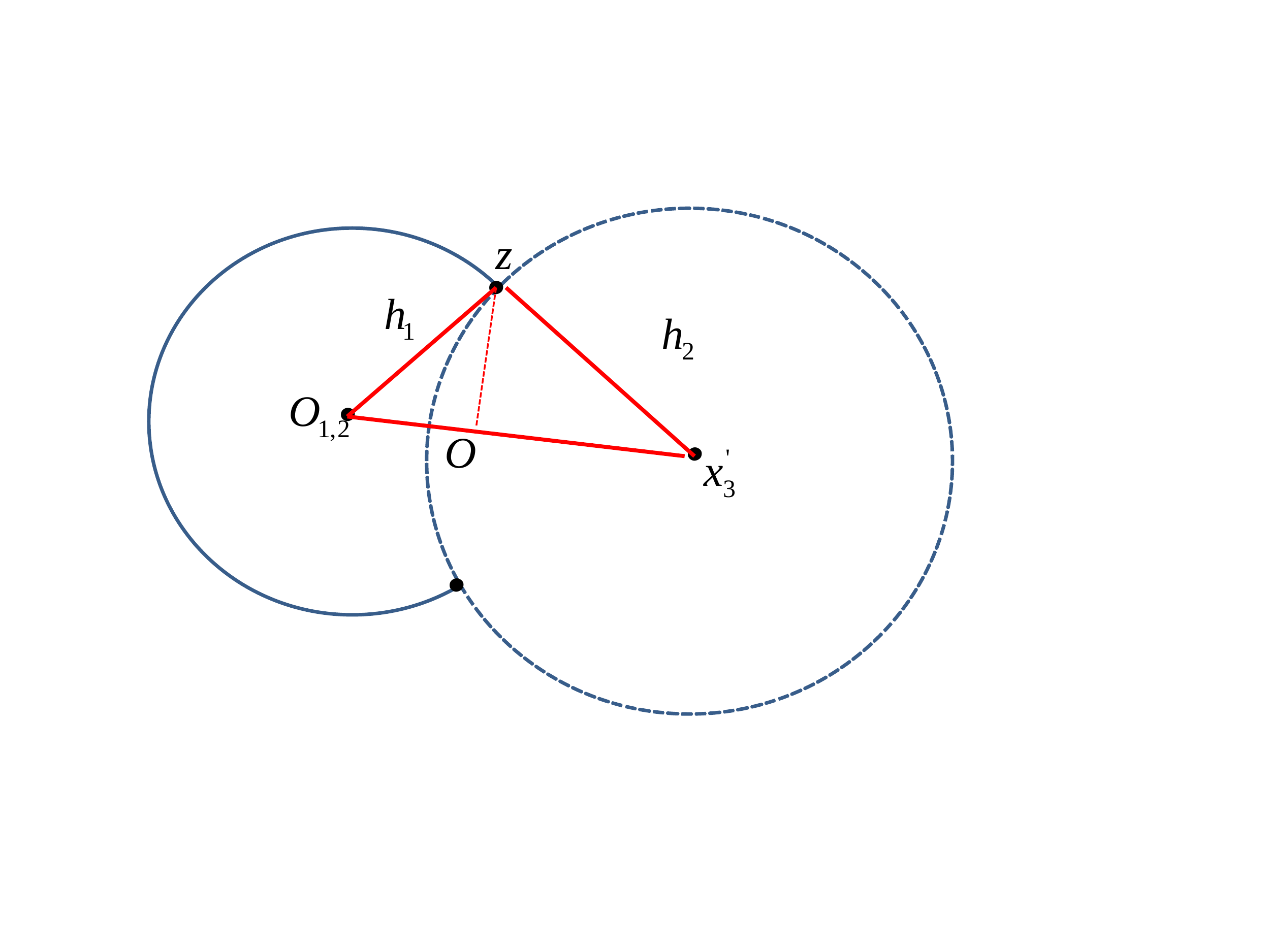}}
\caption{\bf Scheme for determining a point in $\R^3$.}
\label{scheme}
\end{figure}

{\bf One point determination scheme: locating a point in $\R^3$ by four distances to the given points:}
\begin{description}{\em
  \item $(1)$ {\bf\rm Determine the perpendicular foot $O_{1,2}$ from two given points $x_j,\,j=1,2$ and two distances $r_j,\,j=1,2$.} \\
              As shown in Figure \ref{scheme}(a), by the Heron's formula, the area of the triangle $\Delta zx_1x_2$ is
              \ben
              S_{\Delta zx_1x_2}=\sqrt{p(p-r_1)(p-r_2)(p-|x_1-x_2|)}
              \enn
              with $p:=\frac{r_1+r_2+|x_1-x_2|}{2}$. Note that $S_{\Delta zx_1x_2}=\frac{1}{2}|x_1-x_2||z-O_{1,2}|$, we deduce that
              \ben
              h_1:=|z-O_{1,2}|=\frac{2S_{\Delta zx_1x_2}}{|x_1-x_2|}.
              \enn
              Then by the Pythagorean theorem, we have
              \ben
              |x_1-O_{1,2}| = \sqrt{r^2_1-h^2_1} = \sqrt{r^2_1-\frac{4S^2_{\Delta zx_1x_2}}{|x_1-x_2|^2}}.
              \enn
              Denote by $t_1:=\frac{|x_1-O_{1,2}|}{|x_1-x_2|}$, we have
              \ben
              O_{1,2}=\left\{
                        \begin{array}{ll}
                          x_1+t_1(x_2-x_1), & \hbox{if $r_1^2+|x_1-x_2|^2\geq r_2^2$;} \\
                          x_1-t_1(x_2-x_1), & \hbox{else.}
                        \end{array}
                      \right.
              \enn
  \item $(2)$ {\bf\rm Determination of $h_2$ and the projection $x_3^{\prime}$.} \\
             As shown in Figure \ref{scheme}(b), noting that $x_3x_3^{\prime}$ is parallel to $x_1x_2$, we have
             \ben
              x_3^{\prime}=x_3+t(x_1-x_2)
             \enn
             with $t$ is chosen such that $(x_3^{\prime}-O_{1,2})\cdot (x_1-x_2)=0$. By the Pythagorean theorem again, we have
             \ben
              h_2:=|z-x_3^{\prime}|=\sqrt{r_3^2-|x_3-x_3^{\prime}|^2}.
             \enn
  \item $(3)$ {\bf\rm Determination of $O$.} \\
             As shown in Figure \ref{scheme}(c), following the step one to reconstruct the point $O$.
  \item $(4)$ {\bf\rm Determine the point $z$.} \\
            Define
             \ben
             l:=\frac{(x_1-x_2)\times(x_3^{\prime}-O_{1,2})}{|(x_1-x_2)\times(x_3^{\prime}-O_{1,2})|}.
             \enn
             Then
             \be\label{z}
             z=O\pm t_2 l
             \en
             with $t_2:=\sqrt{h_1^2-|O-O_{1,2}|^2}$.
            Choosing $z$ from \eqref{z} by letting $|z-x_4|=r_4$.}
\end{description}

Actually, even without knowing $|\tau_1|$, we can determine the distance between the sensor $x$ and the unknown location $z_1$ by the formula \eqref{x-z1},
i.e.,
\be\label{Numx-z1}
|x-z_1| = -i\frac{\frac{u^s(x,k_{+})}{u^s(x,k_{-})}-1}{\int_{k_{-}}^{k_{+}}\frac{u^s(x,k)}{u^s(x,k_{-})}dk}, \quad x\in \{x_1, x_2, x_3, x_4\}.
\en
The price to pay is more data with respect to the frequency $k$. Then, one may derive a formula to locate $z_1$ with the help of a geometrical argument.
After determining the location $z_1$, the scattering strength $\tau_1$ can be computed directly by the representation \eqref{us-source} of the scattered field $u^s(x, k)$ at any sensor $x$.

If the far field pattern is considered, we may determine $\hx\cdot z_1$ by the formula \eqref{hxdotz1}, i.e.,
\be\label{Numhxdotz1}
\hx\cdot z_1 = i\frac{\frac{u^\infty(\hx,k_{+})}{u^\infty(\hx,k_{-})}-1}{\int_{k_{-}}^{k_{+}}\frac{u^\infty(\hx,k)}{u^\infty(\hx,k_{-})}dk}, \quad \hx\in \{\hx_1, \cdots, \hx_n\}.
\en
Then the location $z_1$ can be determined by solving a
system of linear equation with $n$ unknowns in $\R^n,\,n=2,3$. In particular, one may take $\hx$ to be the unit vectors in Cartesian coordinates. After this, the scattering strength can be computed by the representation \eqref{uinf-source} of the far field pattern.

We are more interested in the case with multiple point sources, i.e., $M>1$.
Define
\be\label{IzMx}
I_{M}(z, \hx):=\left|\int_{0}^{K} \left\{u^{\infty}(\hx,k) e^{ik\hx\cdot z}+u^{\infty}(-\hx,k) e^{-ik\hx\cdot z}\right\}dk\right|, \quad z\in\R^n, \hx\in\Theta_L.
\en
By the analysis in the proof of Theorem \ref{Uniqueness-nknM}, for large $K$, the indicator $I(z, \hx)$ blows up when the sampling point $z$ located on the hyperplanes $\hx\cdot(z-z_m)=0,\,m=1,2,\cdots, M$. Then we expect the superposition of $I_M(z, \hx)$ with respect to $\hx\in\Theta_L$ can be used to locate the locations $z_m, m=1,2,\cdots, M$. Based on this idea, we define
\be\label{IzM}
I_{M}(z):=\sum_{\hx\in\Theta_L}I_M(z, \hx).
\en
Numerically, the value of $I_M(z)$ is expect to be large if $z=z_m,\,m=1,2,\cdots, M$ and small otherwise.
After locating the locations $z_m,\,m=1,2,\cdots, M$, we may compute the scattering strength with the help of the following formula
\be\label{Itau}
\tau_m = \frac{1}{2K}\int_{0}^{K} \left\{u^{\infty}(\hx,k) e^{ik\hx\cdot z_m}+u^{\infty}(-\hx,k) e^{-ik\hx\cdot z_m}\right\}dk, \quad m=1,2,\cdots, M.
\en
Here, $\hx$ is chosen such that $\hx\cdot z_{m_1}\neq \hx\cdot z_{m_2}$ if $m_1\neq m_2$.
As mentioned in the proof of Theorem \ref{Uniqueness-nknM}, the right hand side extends to $\tau_m$ as $K\rightarrow \infty$. Thus by taking $K$ large enough, we hope to determine the scattering strength $\tau_m, \,m=1,2,\cdots, M$.

\subsection{Numerical methods for point like scatterers}

Now we turn to the scattering of plane wave by point like scatterers. The indicator \eqref{Iz1} with $u^{s}(x,k)$ replaced by $u^s(x,\hth, k)$ also works for locating a single point like scatterers. As in the previous subsection, we are more interested in the case with multiple point like scatterers.
We need some modifications due to the incident plane waves. Of practical interest is the backscattering inverse scattering problems, i.e., $\hx=-\hth$.
We assume we have the backscattering far field patterns $u^{\infty}(\hx,-\hx, k)$ for $\hx\in\Theta_L$ and $k\in (0, K)$. Define
\be\label{IzMx-obstacle}
\wi{I}_{M}(z, \hx):=\left|\int_{0}^{K} \left\{u^{\infty}(\hx,-\hx,k) e^{2ik\hx\cdot z}+u^{\infty}(-\hx,\hx,k) e^{-2ik\hx\cdot z}\right\}dk\right|, \quad z\in\R^n, \hx\in\Theta_L.
\en
Similarly, for large $K$, the indicator $\wi{I}_M(z, \hx)$ blows up when the sampling point $z$ located on the hyperplanes $\hx\cdot(z-z_m)=0,\,m=1,2,\cdots, M$. Then we further define
\be\label{IzM-obstacle}
\wi{I}_{M}(z):=\sum_{\hx\in\Theta_L}\wi{I}_M(z, \hx).
\en
Again, the value of $\wi{I}_M(z)$ is expect to be large if $z=z_m,\,m=1,2,\cdots, M$ and small otherwise.
After locating the locations $z_m,\,m=1,2,\cdots, M$, we may compute the scattering strength with the help of the following formula
\be\label{Itau-obstacle}
\tau_m = \frac{1}{2K}\int_{0}^{K} \left\{u^{\infty}(\hx,-\hx,k) e^{2ik\hx\cdot z_m}+u^{\infty}(-\hx,\hx,k) e^{-2ik\hx\cdot z_m}\right\}dk, \quad m=1,2,\cdots, M.
\en

\section{Numerical examples and discussions}
\label{NumExamples}
\setcounter{equation}{0}
This section is devoted to the some numerical examples to verify the effectiveness and robustness of the numerical methods proposed in the previous section. Note that both the theoretical results and numerical methods for the point obstacle case is quite similar to the point source case, therefore we only present the examples with point sources.
\subsection{One point source}
First, we consider the identification of one point source by the scattered fields at four sensors:
\ben
x_1=(2,0,0),\quad x_2=(0,2,0),\quad x_3=(0,0,2)\quad\mbox{and}\quad x_4=(-2,-2,-2).
\enn
The following three pairs of point sources with different locations and scattering strengths are considered.
\begin{itemize}
  \item $z_1=(1,1,1),\quad\tau_1=1+i;$
  \item $z_2=(1,0,1),\quad\tau_2=1-i;$
  \item $z_3=(0,1,1),\quad\tau_3=-1-i.$
\end{itemize}
We first use \eqref{Numx-z1} and \eqref{us-source} to get the strength, and then use
the indicator \eqref{Iz1} to capture the location of the point source. Table~\ref{indicator.one} gives the numerical results with different relative noise. In the numerical implementation,
 trapezoid integral formula is applied with $k_{\_}=1,k_+=100,dk=0.005$. Sampling space $0.05$ is used in the indicator \eqref{Iz1}.
\begin{table}
\begin{center}
\begin{tabular}{llllll }
\hline
True point&$z_1=(1,1,1)$ & $z_2=(1,0,1)$   &$z_3=(0,1,1)$   \\
& $\tau_1=1+i$ & $\tau_2=1-i$ &$ \tau_3=-1-i$\\
\hline
$0$ noise&$(1,1,1)$  & $(1,0,1)$ &$(0,1,1)$  \\
&$1.0024+ 0.9909i$ &$0.9931-1.0012i$ & $-0.9972-0.9947i$\\
\hline
$2\%$ noise&$(1,1,1)$ & $(0.95,-0.05,0.95)$ &$(0.05,1.05,1.05)$   \\
& $0.9750+1.0094i$&$0.9735-1.0055i$ & $ -1.0280- 0.9947i$\\
\hline
$5\%$ noise &$(1,1,1)\ $ & $(0.9,-0.15,0.9)$ & $(-0.1,0.95,0.9)$ \\
&$1.0252+0.8692i$&$0.9154-1.0180i$ &$-1.0787-0.8666i $\\
\hline
$10\%$ noise &$(1,1.05,1)\ $& $(0.95,0.15,1)$ &$(-0.1,0.85,0.8)$\\
&$0.9313+1.1136i$&$0.8008- 0.9240i$ &$ -0.8865-1.2529i$\\
\hline
\end{tabular}
\caption{One point source reconstruction using the scattered fields.}
\label{indicator.one}
\end{center}
\end{table}
We also test {\bf One point determination scheme} for the same points. Table~\ref{geo.one} give the results with different noise.\\
\begin{table}
\begin{center}
\begin{tabular}{llllll }
\hline
True point&$z_1=(1,1,1)$ & $z_2=(1,0,1)$   &$z_3=(0,1,1)$   \\
\hline
$0$ noise&$(1,1,1)$  & $(1,0,1)$ &$(0,1,1)$  \\
\hline
$2\%$ noise&$(0.99,1.02,0.99)$ & $(1.04,0.01,1.05)$ &$(-0.01,1.00,1.01)$   \\
\hline
$5\%$ noise &$(0.92,0.88,0.90)\ $ & $(1.11,0.14,1.03)$ & $(0.04,1.00,0.96)$ \\
\hline
$10\%$ noise &$(0.87,1.03,1.12)\ $& $(1.25,-0.11,1.09)$ &$(-0.03,1.21,1.20)$\\
\hline
\end{tabular}
\caption{Locating the point source by the {\bf One point determination scheme}.}
\label{geo.one}
\end{center}
\end{table}

Next, the construction using the far field pattern is tested, three observation directions $\hat{x}_1=(1,0,0),\hat{x}_2=(0,1,0),\hat{x}_3=(0,0,1)$ are chosen.
Equations \eqref{Numhxdotz1} and \eqref{uinf-source} are used to get the location and scattering strength respectively. The results are summarized in table~\ref{far.one}
with different noise.

\begin{table}
\begin{center}
\begin{tabular}{llllll }
\hline
True point&$z_1=(1,1,1)$ & $z_2=(1,0,1)$   &$z_3=(0,1,1)$   \\
& $\tau_1=1+i$ & $\tau_2=1-i$ &$ \tau_3=-1-i$\\
\hline
$0$ noise&$(1,1,1)$  & $(1,0,1)$ &$(0,1,1)$  \\
&$0.9974+1.0026i$ &$1.0026-0.9974i$ & $-1.0000-1.0000i$\\
\hline
$2\%$ noise&$(0.99,1.00,1.00)$ & $(1.01,0,0.99)$ &$(0,1,1)$   \\
& $1.0279+1.0006i$&$1.0251-0.9981i$ & $ -1.0032 -1.0032i$\\
\hline
$5\%$ noise &$(1.02,0.99,0.98)\ $ & $(0.98,0,1.00)$ & $(0,1.00,0.96)$ \\
&$1.0093+1.0480i$&$0.9753-1.0121i$ &$-0.9793-0.9793i $\\
\hline
$10\%$ noise &$(1.08,0.95,0.94)\ $& $(0.97,0,1.02)$ &$(0,0.98,1.04)$\\
&$0.8724+1.0142i$&$1.0434-1.1004i$ &$ -0.9350-0.9350i$\\
\hline
\end{tabular}
\caption{One point source reconstruction using the far fields.}
\label{far.one}
\end{center}
\end{table}

 \subsection{Multiple point sources}
In this part, more interesting case with multiple point sources are considered.
For simplicity, we present the examples in two dimensional case. The three dimensional case are similar by using the same indicator.
 We first use $\eqref{IzMx}$ and
$\eqref{IzM}$ to locate the positions of the point sources, then $\eqref{Itau}$ is used to get the strength. In the sequel, we take $k_{\_}=40,k_+=200, dk=1$ and $10\%$ relative noise in the numerical results.

In the first example, we consider five points sources located at
\ben
z_1=(1,0),\quad z_2=(0,1),\quad z_3=(-1,0),\quad z_4=(0,-1),\quad z_5=(0,0).
\enn
We use the $rand$ in MATLAB to set the true strengths.
Figure \ref{crossone} shows the reconstructions with one pair of directions. Clearly, the parallel lines containing the unknown points with normal in the observation direction are clearly reconstructed. To locate the points, we take $8$ observation directions $\hat x_j=(\cos\alpha_j,\sin\alpha_j)$ where $\alpha_j=2\pi j/8, j=0,\cdots,7$. Figure \ref{cross} gives the reconstruction of the five points. Table~\ref{fivetau} gives the comparison of true strength and computed strength.

In the final example, we use $35$ points to characterize the word "AMSS" (abbreviation for Academy of Mathematics and Systems Science). All the strengths are set to $1$. As shown in Figure~\ref{AMSS}, the $35$ points are well captured, even $10\%$ relative noise is considered. In particular, for this complicated example, $32$ equally distributed observation directions are used, where the direction number is smaller than the point number.

\begin{figure}[htbp]
  \centering
  \subfigure[\textbf{$\hat x=\pm(0,1)$.}]{
    \includegraphics[height=1.1in,width=1.3in]{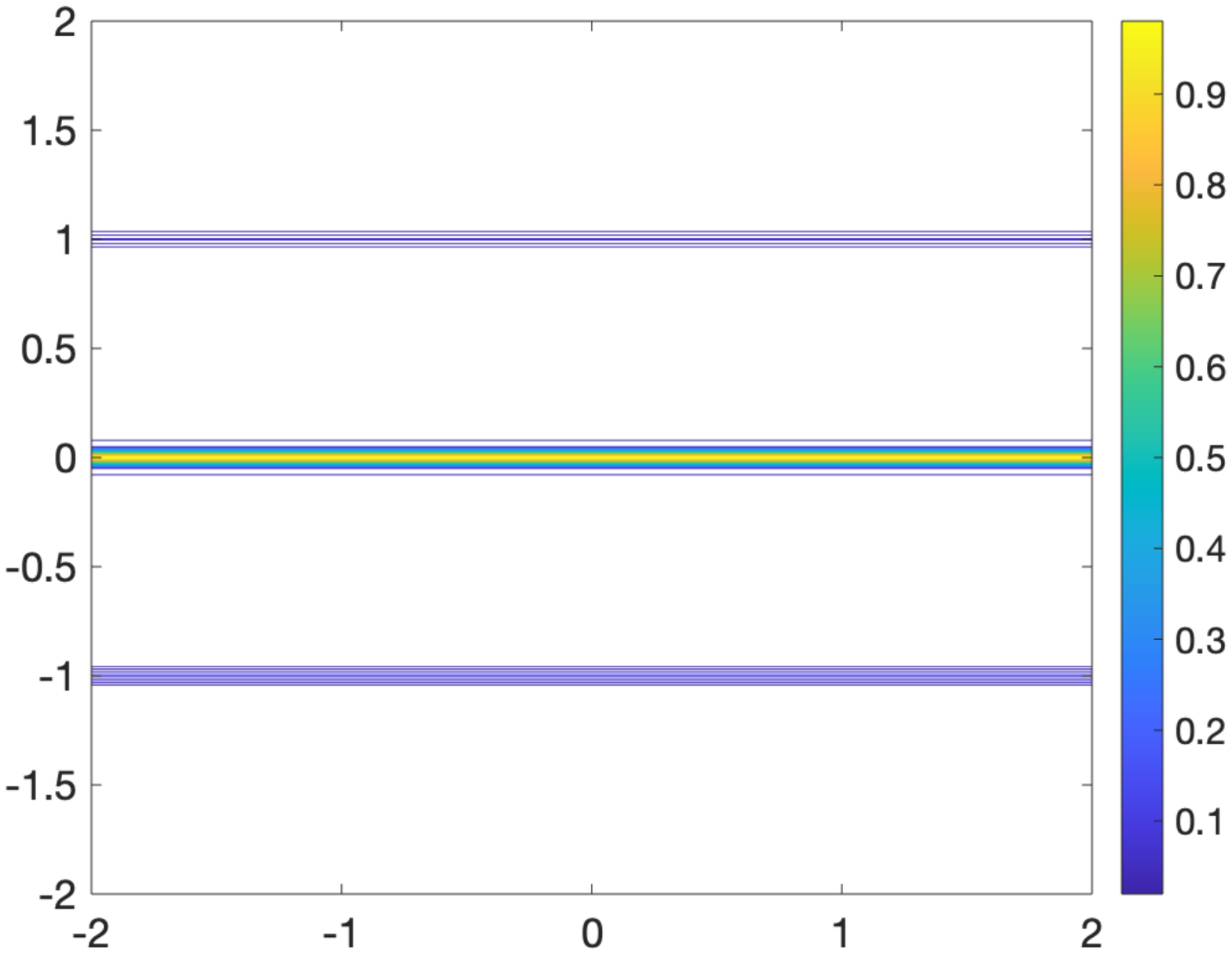}}
  \subfigure[\textbf{$\hat x=\pm(\frac{\sqrt{2}}{2},-\frac{\sqrt{2}}{2})$.}]{
    \includegraphics[height=1.1in,width=1.3in]{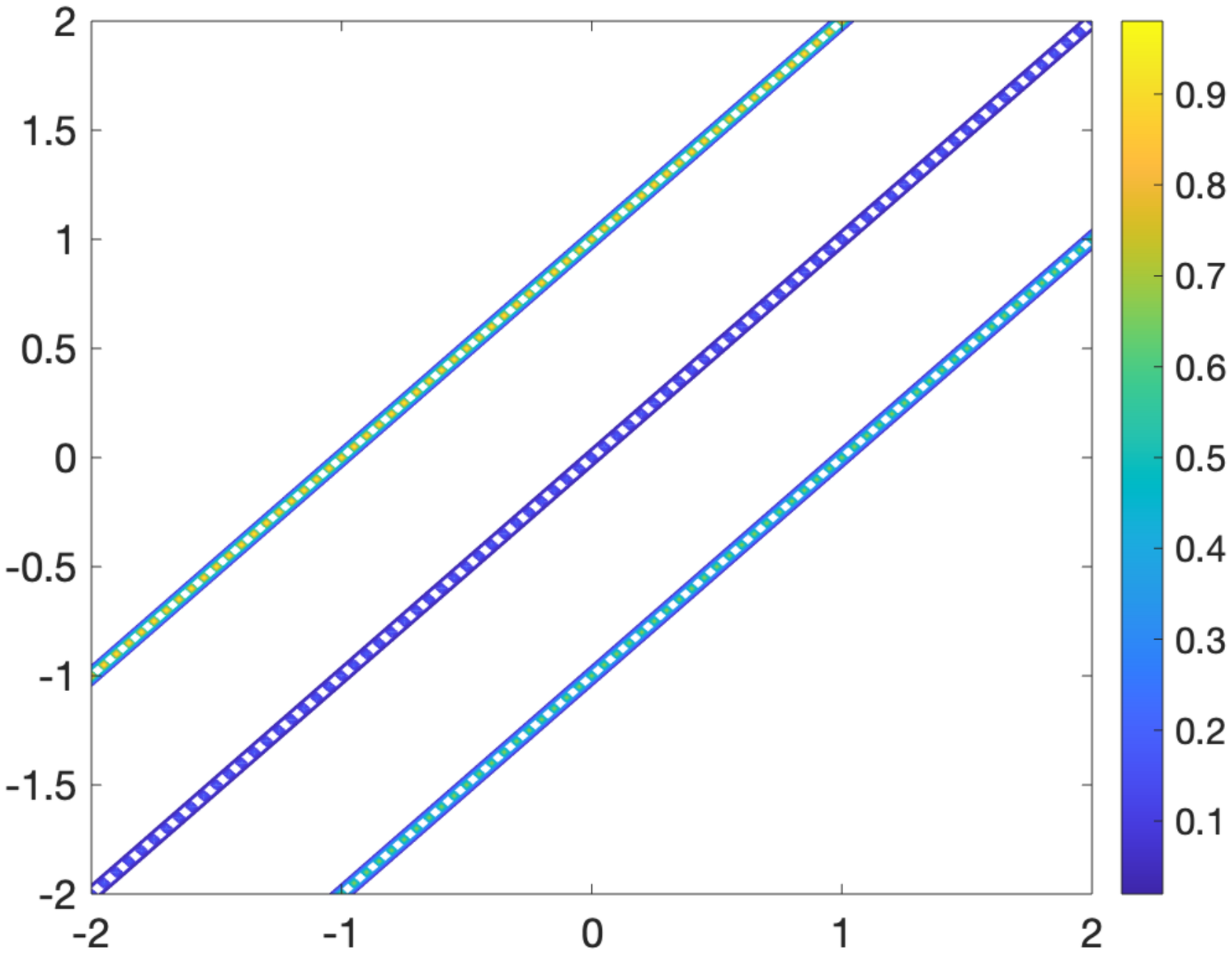}}
  \subfigure[\textbf{$\hat x=\pm(1,0)$.}]{
    \includegraphics[height=1.1in,width=1.3in]{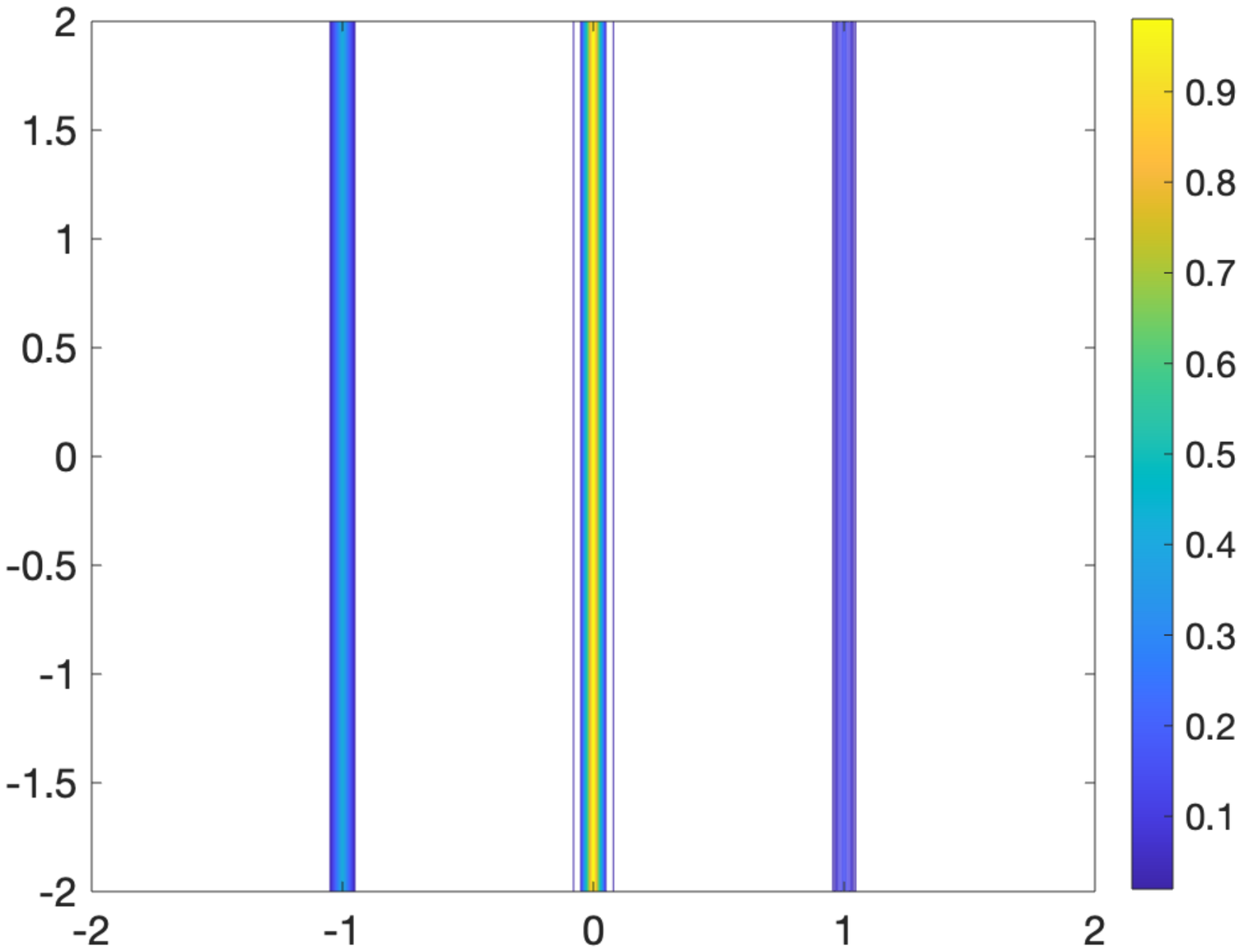}}
  \subfigure[\textbf{$\hat  x=\pm(\frac{\sqrt{2}}{2},\frac{\sqrt{2}}{2})$.}]{
    \includegraphics[height=1.1in,width=1.3in]{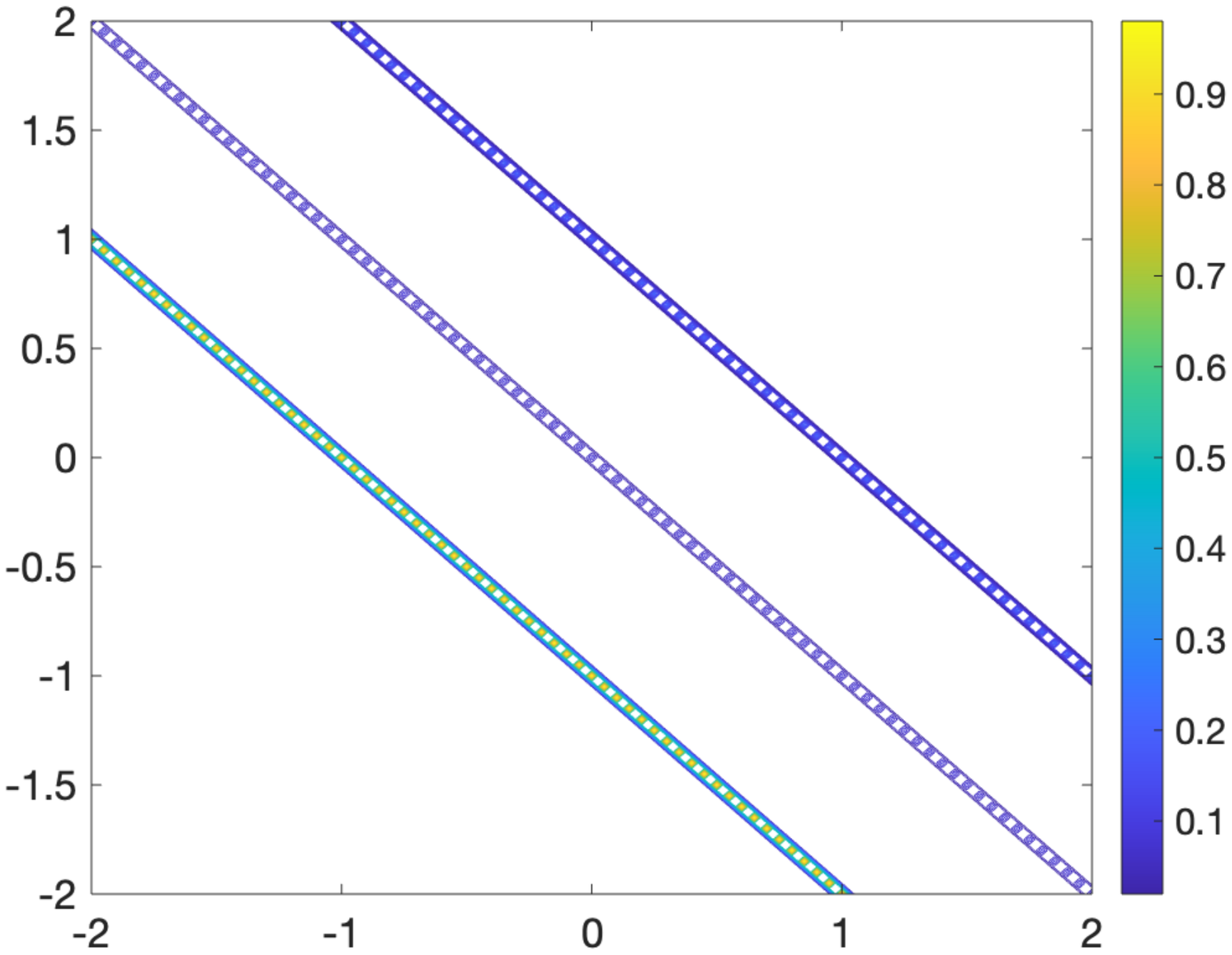}}
\caption{Reconstruction by one pair of directions  with  $10\%$ noise.}
\label{crossone}
\end{figure}

\begin{figure}[htbp]
  \centering
    \includegraphics[height=1.6in,width=1.8in]{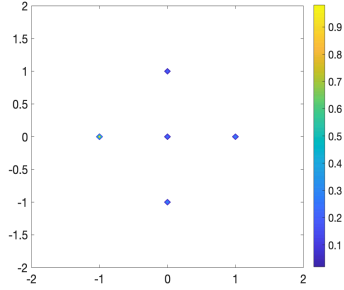}
\caption{Locating the five point sources with 8 directions and $10\%$ noise.}
\label{cross}
\end{figure}

\begin{table}
\begin{center}
\begin{tabular}{llllll }
\hline
&True strength&\qquad Computed strength      \\
\hline
$\tau_1$ & $0.119437 + 0.858134i$ &\qquad $ 0.117949 + 0.860545i $\\
\hline
$\tau_2$ & $0.931100 + 0.056194i$ &\qquad $ 0.939913 + 0.061328i $\\
\hline
$\tau_3$ & $0.994541 + 0.975031i$ &\qquad $ 1.001599 + 0.981825i $\\
\hline
$\tau_4$ & $0.406819 + 0.595928i$ &\qquad $ 0.405445 + 0.607381i$\\
\hline
$\tau_5$ & $0.117482 + 0.901291i$ &\qquad $ 0.131289 + 0.911853i$\\
\hline
\end{tabular}
\caption{Reconstruction of scattering strengths using the far fields with  $10\%$ noise. We take $\hx=(\cos(\pi/16),\sin(\pi/16))$ in this example.}
\label{fivetau}
\end{center}
\end{table}

\begin{figure}[htbp]
  \centering
    \includegraphics[height=1.6in,width=5.6in]{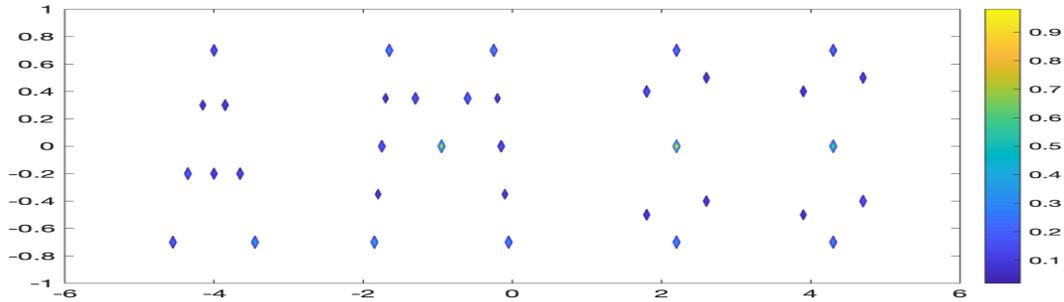}
\caption{Reconstruction of "AMSS" with $32$ directions  and $10\%$ noise.}
\label{AMSS}
\end{figure}

\section{Conclusions and remarks}

We consider the scattering by point objects using measurements at finitely many sensors. Both the uniqueness and numerical methods for identifying the point objects have been studied. The numerical examples further verify the effectiveness and robustness of the proposed numerical methods.

Finally, we want to remark that our numerical algorithms are also works for small inclusions. This is due to the fact that the point objects can be regarded as an approximation of small inclusions.

\section*{Acknowledgement}
The research of X. Ji is supported by the NNSF of China under grants 91630313 and 11971468,
and National Centre for Mathematics and Interdisciplinary Sciences, CAS.
The research of X. Liu is supported by the NNSF of China under grant 11971701, and the Youth Innovation Promotion Association, CAS.

\bibliographystyle{SIAM}

\end{document}